\theoremstyle{plain} \theorembodyfont{\itshape}
\newtheorem{theorem}{Theorem}[section]
\newtheorem{proposition}[theorem]{Proposition}
\newtheorem{lemma}[theorem]{Lemma}
\newtheorem{definition}[theorem]{Definition}
\newtheorem{remark}[theorem]{Remark}
\newtheorem{conjecture}[theorem]{Conjecture}
\newtheorem{problem}[theorem]{Problem}
\newenvironment{proof}
{{\noindent \textbf{Proof}\,\,}}{\hspace*{\fill}$\Box$\medskip}
\numberwithin{equation}{section}
\def\C{{\mathbb C}}
\def\Z{{\mathbb Z}}
\def\Q{{\mathbb Q}}
\def\V{\text{Var}}
\def\t{{\theta}}
\newcounter{listcount}
\title{\textbf{Monodromy and Tangential Center Problems }\thanks{The authors would like to thank the Universities of Bourgogne and
Plymouth respectively for their kind hospitality during the preparation of this work.}}
\author{\sc{Colin Christopher},\\ School of Mathematics and Statistics,\\
University of Plymouth,\\
Plymouth PL4 8AA, U.K.\\ \\
\sc{Pavao Marde\v si\'c},\\
Institut de Math\'ematiques de Bourgogne, \\Unit\'e mixte de recherche 5584 du C.N.R.S.,\\
Universit\'e de Bourgogne, \\
B.P. 47 870, 21078 Dijon Cedex, France.}
\begin{document}
\maketitle

\begin{abstract}
We consider families of Abelian integrals arising from perturbations
of planar Hamiltonian systems.  The {\it tangential center focus
problem} asks for the conditions under which these integrals vanish
identically. The problem is closely related to the {\it monodromy
problem}, which asks when the monodromy of a vanishing cycle
generates the whole homology of the level curves of the Hamiltonian.
We solve both these questions for the case when the Hamiltonian is
hyperelliptic.  As a side-product, we solve the corresponding
problems for the ``$0$-dimensional Abelian integrals" defined by
Gavrilov and Movasati.
\end{abstract}


\section{Introduction}
The weak Hilbert $16^{th}$ problem, as posed by Arnold \cite{A},
asks:

\begin{problem}[Weak Hilbert $16^{th}$ problem]\label{16}
Let $F\in\C[x,y]$ and $\omega=P(x,y)\,dx+Q(x,y)\,dy$, with $P,Q\in
\C[x,y]$ and consider the system
\begin{equation}\label{def}
dF+\varepsilon\omega=0.
\end{equation}
Bound the number of real limit cycles in the system (\ref{def}) for small
values of $\varepsilon$. \end{problem}

The problem leads to the study of the zeros of the Abelian
integral
\begin{equation}\label{ab}
I(t)=\int_{\delta(t)}\omega,
\end{equation}
where $\delta(t)$ is a family of cycles lying in $F^{-1}(t)$.
Provided this integral does not vanish identically, limit cycles
of \eqref{def} correspond to zeros of $I(t)$ for generic values
of $t$.

That is, to first order we are led to solve the following
simpler problem.

\begin{problem}[Tangential Hilbert $16^{th}$
problem]\label{tangential}
  Bound the number of zeros of the Abelian integral \eqref{ab}
  in terms of the degrees of $F$ and $\omega$.
\end{problem}

When, the Abelian integral vanishes identically, it provides no
information about the limit cycles of \eqref{def}, and higher
order perturbation theory must be used.  It is therefore of interest
to understand under what conditions this can happen.

The classical center focus problem asks for a characterization of centers
of planar polynomial vector fields. The problem of when an Abelian
integral vanishes identically along a vanishing cycle, can be seen as a
tangential version of this problem.

\begin{problem}[Tangential center focus problem]\label{tangentialcf}
Characterize the conditions under which the Abelian integral
$I(t)$ of (\ref{ab}) vanishes identically along a vanishing cycle $\delta(t)$
associated to a Morse singular point $p$ of $F$.
\end{problem}

If such a $\delta(t)$ exists, we say that \eqref{def} has a
{\it tangential center} at $p$.
\smallskip

Problem~\ref{tangentialcf} was solved by Il'yashenko for generic $F$ by proving
that, for generic $F$, the monodromy acts transitively on the
first homology group of the generic fiber. The vanishing of $I(t)$
therefore implies the vanishing of the Abelian integral along all
cycles in $H_1(F^{-1}(t)).$ This in turn implies that the form
$\omega$ is relatively exact.

In fact, the condition that the vanishing of an Abelian integral
\eqref{ab} along a family of cycles $\delta(t)$ implies the relative
exactness of $\omega$ is called ``condition $(*)$" by Fran\c{c}oise.
Under condition $(*)$, Fran\c{c}oise \cite{F} (see also \cite{Y})
gives an algorithm
for calculating higher order terms of the displacement function.

By the results of Bonnet and Dimca \cite{BD} (and, in a more
restricted setting, Gavrilov \cite{G} and Il'yashenko \cite{Il}),
if we assume the vanishing of the Abelian integrals on all cycles,
then $P(F)\omega$ must be relatively exact, for some polynomial $P$,
whose roots correspond to some exceptional fibers. Condition $(*)$
therefore follows automatically (after possible multiplication of
$\omega$ by a factor $P(F)$) if we can show that under the action of
the monodromy, the cycle $\delta(t)$ generates the whole of the
homology of the generic fiber of $F$ over $\Q$.

This leads to a natural problem:

\begin{problem}[Monodromy problem]\label{monprob}  Under what conditions on
$F$ is the $\Q$-subspace of $H_1(F^{-1}(t),\Q)$ generated by the
images of a vanishing cycle of a Morse point under monodromy, equal
to the whole of $H_1(F^{-1}(t),\Q)$?
\end{problem}

The principal motivation for this paper was to solve these last two
problems in the case when $F(x,y)=y^2+f(x)$ (the {\it hyperelliptic
case}).  In more detail, we prove the following two theorems.

\begin{theorem}\label{hyperellipticthm}
   The system \eqref{def}, with $F = y^2 + f(x)$, has a tangential center
   with associated vanishing cycle $\delta(t)$, if and only if (i) or (ii) is
   verified:
   \begin{list}{(\roman{listcount})\hfill}{\usecounter{listcount} \setlength{\leftmargin}{4em} \setlength{\labelwidth}{2em}}
   \item  the form $\omega$ is relatively exact i.e.
   $\omega=A\,dF+dB$, with $A,B\in\C[x,y]$.
   \item  $f$ is decomposable i.e.
           $f=g\circ h$, and $\omega=\tilde{\omega}+\pi^*\eta$, where $\tilde{\omega}$ is
           relatively exact, and $\pi_*\delta(t)$ is homotopic to zero
           in $y^2+g(z)=t$, where $\pi(x,y)=(h(x),y)=(z,y)$.
   \end{list}
\end{theorem}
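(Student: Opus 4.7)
The plan is to prove both directions, using the solution of the monodromy problem (Problem~\ref{monprob}) for hyperelliptic $F$ as an input from a companion result of the paper.

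\emph{Sufficiency.} If (i) holds, then $I(t)=\int_{\delta(t)}(A\,dF+dB)=0$, since $dF$ restricts to zero on $F^{-1}(t)$ and $\delta(t)$ is closed. If (ii) holds, the contribution of $\tilde{\omega}$ vanishes by the same argument, and
\begin{equation*}
\int_{\delta(t)}\pi^*\eta=\int_{\pi_*\delta(t)}\eta=0
\end{equation*}
because $\pi_*\delta(t)$ is null-homotopic in $\{y^2+g(z)=t\}$.

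\emph{Necessity.} Assume $I(t)\equiv0$, and let $V\subseteq H_1(F^{-1}(t),\Q)$ be the $\Q$-span of the monodromy orbit of $\delta(t)$. If $V=H_1(F^{-1}(t),\Q)$, analytic continuation of $I$ along loops in the base of the Milnor fibration yields $\int_\gamma\omega\equiv 0$ for every $\gamma\in H_1(F^{-1}(t),\Q)$. By Bonnet--Dimca~\cite{BD}, there is then a polynomial $P$ for which $P(F)\omega$ is relatively exact. Because $F=y^2+f(x)$, the only fibres that can contribute to the roots of $P$ sit above critical values of $f$; a residue/Laurent-expansion analysis of $\omega$ at such fibres, using that $\omega$ has polynomial coefficients, allows one to successively remove the factors of $P$ and conclude that $\omega$ itself is relatively exact, giving (i).

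If $V$ is a proper subspace of $H_1(F^{-1}(t),\Q)$, then by the monodromy theorem for hyperelliptic $F$ (established separately in the paper), $f$ decomposes as $f=g\circ h$ with $\deg h\ge 2$, and the map $\pi(x,y)=(h(x),y)$ sends $\delta(t)$ to a null-homotopic loop in $\{y^2+g(z)=t\}$. The principal obstacle is then to produce the splitting $\omega=\tilde{\omega}+\pi^*\eta$ required by (ii). My strategy is to use the action of the Galois group $G$ of the Galois closure of the covering $\pi$ on $H_1(F^{-1}(t),\Q)$ and to decompose both the homology and the form $\omega$ into $G$-isotypic components. The trivial isotypic part of $\omega$ descends to a form $\eta$ on the quotient curve, contributing $\pi^*\eta$; on the nontrivial isotypic components the induced monodromy acts transitively (this is precisely the content of the monodromy result once the topological obstruction coming from the factorisation $f=g\circ h$ has been quotiented out), and since the integral of $\omega$ vanishes on all of $V$, the argument of the preceding case applies to each nontrivial component and shows that their sum $\tilde{\omega}$ is relatively exact.
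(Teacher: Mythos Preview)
Your approach differs substantially from the paper's, and it contains real gaps.

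The paper does \emph{not} invoke the monodromy theorem (Theorem~\ref{hyperellipticmon}) or Bonnet--Dimca in the proof of Theorem~\ref{hyperellipticthm}. Instead it first uses a preparation lemma to write any polynomial $1$-form, modulo relatively exact terms, as $\omega = yk(x)\,dx$. It then carries out a direct local analysis at the Morse point: choosing an analytic coordinate $X$ with $X^2=f(x)$, the vanishing cycle becomes the circle $X^2+y^2=t$, and the vanishing of the Abelian integral is equivalent to $2k(x)/f'(x)$ being an even function of $X$, i.e.\ an analytic function of $f(x)$. Integrating gives $K(x)=\Phi(f(x))$ for primitives $K$ of $2k$ and $\Phi$, and the $0$-dimensional centre result (Theorem~\ref{center}) applied to the pair $(f,K)$ then produces a common polynomial composition factor $h$. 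This simultaneously yields the decomposition $f=g\circ h$ \emph{and} the identity $\omega = \pi^*(yr'(z)\,dz)$, so the splitting required in~(ii) falls out automatically.

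Your argument has two genuine gaps. First, in your Case~1, Bonnet--Dimca only gives that $P(F)\omega$ is relatively exact; the step where you ``successively remove the factors of $P$'' by a residue/Laurent analysis is asserted but not carried out, and is not part of the cited result. Second, and more seriously, in your Case~2 the monodromy theorem hands you a decomposition $f=g\circ h$ and the null-homotopy of $\pi_*\delta(t)$, but says nothing about $\omega$. Your isotypic-decomposition sketch does not explain why the $G$-invariant part of a \emph{polynomial} $1$-form on $\C^2$ descends to a polynomial form on the quotient, nor why the non-invariant part has vanishing periods over the \emph{full} homology (the claim that ``the induced monodromy acts transitively on the nontrivial components once the factorisation has been quotiented out'' is exactly what would require proof), nor how you then conclude relative exactness of $\tilde\omega$ without again invoking Bonnet--Dimca and facing the same $P(F)$ obstruction. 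In the paper's proof none of these difficulties arise, because the common factor $h$ is extracted via L\"uroth from $f$ and the antiderivative $K$ together, so the factorisation of $\omega$ through $\pi$ comes for free.
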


\begin{theorem}\label{hyperellipticmon}
   Let $F = y^2 + f(x)$,
   with associated vanishing cycle $\delta(t)$ at a Morse point,
   then one of the following must hold.
   \begin{list}{(\roman{listcount})\hfill}{\usecounter{listcount} \setlength{\leftmargin}{4em} \setlength{\labelwidth}{2em}}
   \item  the monodromy of $\delta(t)$ generates the homology $H_1(F^{-1}(t),\Q)$.
   \item  $f$ is decomposable i.e.
   $f=g\circ h$, and $\pi_*\delta(t)$ is homotopic to zero
           in $y^2+g(z)=t$, where $\pi(x,y)=(h(x),y)=(z,y)$.
   \end{list}
\end{theorem}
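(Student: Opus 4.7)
The plan is to translate the problem into permutation-group theory on the roots of $f(x)=t$, and to read off the decomposition $f=g\circ h$ from any proper monodromy-invariant subspace of $H_1$ that contains $\delta(t)$. To set up a combinatorial model of homology, fix a regular value $t$ and let $a_1,\ldots,a_n$ be the roots of $f(x)-t=0$, where $n=\deg f$. The affine curve $F^{-1}(t)=\{y^2=t-f(x)\}$ is a double cover of the $x$-line branched exactly at these roots, so $H_1(F^{-1}(t),\Q)$ is naturally identified with the augmentation module $V_0=\{v\in\Q^n:\sum v_i=0\}$ via $\delta_{ij}\leftrightarrow e_i-e_j$, where $\delta_{ij}$ is the standard cycle encircling $a_i$ and $a_j$. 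Under this identification, the given vanishing cycle $\delta(t)$ corresponds to $e_i-e_j$ for the two roots that coalesce at the Morse point.

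Next, let $G\subset S_n$ denote the monodromy group of $f\colon\C\to\C$, acting on $\{1,\ldots,n\}$ by permuting the roots. The central step will be to show that the subspace $V\subset V_0$ generated by the monodromy orbit of $\delta(t)$ coincides with the smallest $G$-invariant subspace of $V_0$ (for the natural permutation action) that contains $e_i-e_j$. A priori the monodromy acts by Picard--Lefschetz transvections rather than by transpositions, but a direct computation on the basis $\{\delta_{ij}\}$ should show that iterating the Dehn twists around vanishing cycles of $f$ generates the same linear span as the naive $G$-action. I expect this linearization to be the main technical obstacle.

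Assuming case~(i) fails, so $V\subsetneq V_0$, I would then define a relation on $\{1,\ldots,n\}$ by $i\approx j$ iff $e_i-e_j\in V$. Reflexivity and symmetry are immediate; transitivity follows from $(e_i-e_j)+(e_j-e_k)=e_i-e_k$; and $G$-invariance follows from that of $V$. By hypothesis $i\approx j$ with $i\neq j$, while $\approx$ cannot be the all-relation, else $V$ would contain every $e_k-e_\ell$ and equal $V_0$. Thus the classes of $\approx$ form a nontrivial block system $\mathcal{B}$ for $G$, and the two coalescing roots lie in a common block.

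To finish, I would invoke L\"uroth's theorem together with the standard correspondence between block systems of the monodromy group of a polynomial and intermediate fields $\C(f(x))\subsetneq\C(h(x))\subsetneq\C(x)$, which turns $\mathcal{B}$ into a factorization $f=g\circ h$ whose blocks are the fibers of $h$. Since $h(a_i)=h(a_j)$, one may represent $\delta(t)$ for $t$ near the critical value as a small loop around $a_i,a_j$, and its image under $\pi(x,y)=(h(x),y)$ is a small loop near the single smooth point $(h(a_i),0)$ on $\{y^2+g(z)=t\}$. Locally this point has a disk neighbourhood in the curve (parametrized by $y$), so the loop is nullhomotopic, giving case~(ii).
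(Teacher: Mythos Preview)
Your plan contains a genuine gap at the step you yourself flag as the ``main technical obstacle.''  You assert that the $\Q$-span of the monodromy orbit of $\delta(t)$ in $H_1(F^{-1}(t),\Q)$ coincides with the smallest subspace of $V_0$ invariant under the \emph{permutation} action of $G=\mathrm{Mon}(f)$ and containing $e_i-e_j$.  But you give no argument for this, and it is not a routine computation: under the identification $L_k\leftrightarrow e_k$, a swap of two roots $x_i,x_{i+1}$ acts on $H_1$ by the Picard--Lefschetz transvection $L_{i+1}\mapsto L_i$, $L_i\mapsto 2L_i-L_{i+1}$, which differs from the transposition by twice the vanishing cycle.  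The two linear actions on $V_0$ are honestly different (one is symplectic, the other orthogonal), and there is no general reason their orbit-spans should agree.  Your downstream construction of the block system requires $V$ to be invariant under the permutation action of $G$, so without this step the equivalence relation $i\approx j$ need not be $G$-invariant and the argument collapses.

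The paper does not try to prove your linearization claim.  Instead it observes that the transvection and the transposition agree \emph{modulo $2$}, so over $\Z_2$ the monodromy of $F$ on the $L_i$ is literally the permutation action of $\mathrm{Mon}(f)$ on the roots.  It then invokes the Burnside--Schur classification (Theorem~\ref{muller}, using the regular cyclic subgroup coming from monodromy at infinity): $\mathrm{Mon}(f)$ is either $2$-transitive, imprimitive, Chebyshev of prime degree, or $x^p$.  In the $2$-transitive (and Chebyshev) cases one finds loops $\ell_k$ with $\sigma(\ell_k)\delta(t)\equiv L_k-L_{k+1}\pmod 2$, so the transition matrix to a basis of $H_1$ is congruent to the identity mod~$2$, hence has odd determinant and is invertible over $\Q$; this gives case~(i).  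In the imprimitive case the paper appeals to the $0$-dimensional Theorem~\ref{0dimmon} to produce $f=g\circ h$ with $h(x_i)=h(x_j)$; since the Morse hypothesis forces $h'(p)=0$, $h''(p)\neq 0$, and $g'(h(p))\neq 0$, the push-forward of $\delta(t)$ lands in a disk neighbourhood of a smooth point of $y^2+g(z)=t$ and is nullhomotopic, giving case~(ii).  The mod~$2$ trick together with Burnside--Schur is precisely the idea that replaces your missing linearization step.
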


\medskip

To prove the above theorems we first reduce them to analogous $0$-dimensional
problems which we consider next.

We define a $0$-dimensional Abelian integral following
Gavrilov and Movasati \cite{GM}.

Let $f\in\C[x]$ be a polynomial and $\delta(t)\in H_0(f^{-1}(t))$ a $0$-cycle:
that is, $\delta(t)=\sum n_i x_i(t) \in f^{-1}(t)$, $n_i\in \C$, with
$\sum n_i=0$ and let $\omega\in\C[x]$ be a polynomial ($0$-form).
A {\it $0$-dimensional Abelian integral} is given by a function
\begin{equation}\label{int0}
I_0(t) = \int_{\delta(t)}\omega:=\sum n_i \omega(x_i(t)).
\end{equation}

A cycle of the form $\delta(t)=x_i(t)-x_j(t)$,
with $f(x_i(t))=f(x_j(t))=t$ is called a {\it simple cycle}.

We characterize the vanishing of $0$-dimensional
Abelian integrals along simple cycles (the {\it $0$-dimensional
tangential center focus problem}) and the conditions under
which a simple cycle generates the whole of the
reduced homology $H_0(f^{-1}(t))$ of the generic fiber (the {\it $0$-dimensional
monodromy problem}).

\begin{theorem}\label{center}
Let $f,\omega\in\C[x]$, $\delta(t)=x_i(t)-x_j(t)$ be a simple cycle
in the generic fiber of $f$. The Abelian integral
$I(t)=\int_{\delta(t)}\omega$ vanishes identically if and only if
there exists a polynomial $h$ with $\deg(h)>1$ such that $f=g\circ h$ and
$\omega=\eta\circ h$, for some polynomials $g$ and $\eta$, and
$\delta(t)=\tilde{\delta}(h(t))$ for some simple cycle
$\tilde{\delta}$ of $g$.
\end{theorem}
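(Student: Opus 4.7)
The plan is to prove both directions of the equivalence, with the substantive work on the ``only if'' implication. For the ``if'' direction, suppose $f = g\circ h$, $\omega = \eta\circ h$, and that $h_*\delta$ equals a simple cycle $\tilde\delta$ of $g$ (possibly the zero cycle). Direct substitution gives
\[
I(t) = \omega(x_i(t)) - \omega(x_j(t)) = \eta(h(x_i(t))) - \eta(h(x_j(t))) = \int_{\tilde\delta(t)} \eta,
\]
which vanishes trivially if $\tilde\delta = 0$ and by induction on $\deg f$ (note $\deg g < \deg f$ since $\deg h \geq 2$) otherwise.

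For the ``only if'' direction, I would regard $x_i(t), x_j(t)$ as elements of the splitting field $L$ of $f(x) - t$ over $K = \C(t)$, so that $I(t) \equiv 0$ becomes the identity $\omega(x_i) = \omega(x_j)$ in $L$. The key construction is the $\C$-subalgebra
\[
S := \{h \in \C[x] : h(x_i) = h(x_j)\} \subseteq \C[x],
\]
which is closed under the ring operations and contains both $f$ (since $f(x_i) = t = f(x_j)$) and $\omega$ (by hypothesis). Apply Lüroth's theorem to $M := \mathrm{Frac}(S) \subseteq \C(x)$ to get $M = \C(r)$ for some $r \in \C(x)$. Since $f \in M$ is a polynomial whose only pole is at infinity, writing $f = G(r)$ forces every point of $G^{-1}(\infty)$ to be totally ramified for $r$ with the only preimage at $\infty$; a Möbius normalization then replaces $r$ by a polynomial $h \in \C[x]$ with $\C(h) = M$. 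Writing $h = p/q$ with $p, q \in S$, evaluating $hq = p$ at $x_i$ and $x_j$ yields $q(x_i)\bigl(h(x_i) - h(x_j)\bigr) = 0$; since a nonzero polynomial in $\C[x]$ cannot vanish at the transcendental-over-$\C$ element $x_i$, we conclude $h(x_i) = h(x_j)$.

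From $f, \omega \in M = \C(h)$, a coprimality argument (using that $\C[y] \hookrightarrow \C[x]$ via $y \mapsto h(x)$ preserves coprimality, so a nontrivial denominator in $\C(y)$ would produce a nontrivial denominator in $\C(x)$) yields the polynomial decompositions $f = g \circ h$ and $\omega = \eta \circ h$. Non-triviality $\deg h > 1$ follows by contradiction: $M = \C(x)$ would give $x = p/q$ for some $p, q \in S$, leading to $q(x_i)(x_i - x_j) = 0$, which is impossible. Setting $\tilde\delta := h_*\delta$ produces either the zero cycle or a simple cycle of $g$, as required.

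The main obstacle I anticipate is the Lüroth normalization step: verifying that the generator of $\mathrm{Frac}(S)$ can be taken to be a polynomial and that the induced factorizations of $f$ and $\omega$ through $h$ are themselves polynomial. Both reduce to pole-at-infinity calculations that use the polynomiality of $f$ in an essential way; once these are in hand, the subalgebra $S$ carries all the algebraic information needed, and no deeper invariant theory or Ritt-type decomposition theorem is required.
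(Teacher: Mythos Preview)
Your argument is correct and takes essentially the same route as the paper: apply L\"uroth to an intermediate object defined by the equalizer condition $R(x_i)=R(x_j)$, then normalize the generator to a polynomial and read off the decompositions of $f$ and $\omega$. The paper streamlines this slightly by working directly with the \emph{field} $\mathcal{F}=\{R\in\C(x):R(x_i)=R(x_j)\}$ rather than your ring $S$ and its fraction field, so that the L\"uroth generator automatically lies in $\mathcal{F}$ and the verification $h(x_i)=h(x_j)$ (as well as $\deg h>1$, from $x\notin\mathcal{F}$) is immediate; the polynomiality of $h$ and of the outer factors is packaged there into a separate preparatory lemma, which your inline pole-at-infinity and coprimality arguments essentially reprove. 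One small correction on the ``if'' direction: the induction you invoke is neither needed nor valid as written---the decomposition produced in the ``only if'' direction always satisfies $h(x_i)=h(x_j)$, so the pushforward cycle is zero and the converse reduces to the trivial computation $\eta(h(x_i))-\eta(h(x_j))=0$.
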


\begin{theorem}\label{0dimmon}
Let $\delta(t)=x_i(t)-x_j(t)$ be a simple cycle
in the generic fiber of $f$. Then either
\begin{list}{(\roman{listcount})\hfill}{\usecounter{listcount} \setlength{\leftmargin}{4em} \setlength{\labelwidth}{2em}}
\item The cycle $\delta(t)$ generates the reduced homology $H_0(f^{-1}(t)))$.
\item $f$ decomposes as $f=g\circ h$, $(\deg(h)>1)$, and
$\delta(t)=\tilde{\delta}(h(t))$ for some simple cycle
$\tilde{\delta}$ of $g$.
\end{list}
\end{theorem}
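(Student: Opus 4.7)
The plan is to reduce the dichotomy to a primitivity question for the monodromy group. Let $G$ denote the monodromy group of $f$, acting as a transitive subgroup of the symmetric group on the roots $\{x_1(t),\dots,x_n(t)\}$ of the generic fibre; equivalently, $G$ is the Galois group of $f(x)-t$ over $\C(t)$. The reduced homology $H_0(f^{-1}(t),\Q)$ is naturally the augmentation kernel of the permutation $\Q G$-module $\Q^n$, and $\delta(t)=x_i(t)-x_j(t)$ is the specific element whose $G$-orbit I would analyse.

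First, I would form the orbital graph $\Gamma$ on $\{1,\dots,n\}$ whose edge set is the $G$-orbit of the unordered pair $\{i,j\}$. A direct calculation (essentially identifying the boundary space of $\Gamma$) shows that the $\Q$-span of the $G$-orbit of $\delta$ in $H_0(f^{-1}(t),\Q)$ consists precisely of the elements $\sum c_k\, x_k$ whose coefficients sum to zero on each connected component of $\Gamma$. Thus the monodromy generates the reduced homology exactly when $\Gamma$ is connected, which is case~(i). If $\Gamma$ is disconnected, its connected components form a $G$-invariant partition of $\{1,\dots,n\}$ placing $i$ and $j$ in a common part, and transitivity of $G$ promotes this to a non-trivial block system, so $G$ acts imprimitively on the fibre.

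Next I would invoke the classical correspondence between block systems of the Galois group of $f(x)-t$ over $\C(t)$ and polynomial decompositions $f=g\circ h$ with $\deg h>1$. The block system yields an intermediate field $\C(t)\subsetneq K\subsetneq\C(x_1)$; by L\"uroth, $K=\C(u)$ for some $u$, and an integrality argument (using that $x_1$ is integral over $\C[t]$) allows $u$ to be chosen of the form $h(x_1)$ for a polynomial $h$. The relation $f(x_1)\in K=\C(h(x_1))$ then forces $f=g\circ h$, and in this factorisation the blocks are exactly the fibres of $h$ over the roots $u_1,\dots,u_d$ of $g(u)=t$. Since $i$ and $j$ share a block, $h(x_i(t))=h(x_j(t))$, and $\delta(t)$ takes precisely the form required by case~(ii).

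The main obstacle will be the Galois-theoretic step: converting the abstract imprimitivity of $G$ into an honest polynomial decomposition of $f$. One must verify that the intermediate field provided by L\"uroth's theorem is generated by a \emph{polynomial} in $x_1$ rather than merely a rational function, and then align the labelling so that the block containing $\{i,j\}$ matches a single fibre of $h$ in the stated factorisation. This is classical, in the spirit of Ritt, but is the only step requiring real technique; the remainder of the argument reduces to linear algebra on orbital graphs and elementary Galois theory.
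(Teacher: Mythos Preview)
Your proposal is correct and follows essentially the same route as the paper: both build the orbital graph on the roots with edge set the $G$-orbit of $\{i,j\}$, observe that connectedness is equivalent to $\delta(t)$ generating the reduced homology, and in the disconnected case pass from the resulting intermediate subgroup/block system to a polynomial decomposition via L\"uroth. The only cosmetic difference is that the paper handles the ``polynomial versus rational generator'' issue by an explicit M\"obius normalisation (Lemma~\ref{lem5.0}) rather than the integrality argument you sketch, and constructs the intermediate subgroup $H$ directly as the setwise stabiliser of the component containing $x_i,x_j$ rather than invoking block-system language.
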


\medskip
The principal tools in the proof of these theorems is L\"uroth's theorem on
field extensions and the Burnside-Schur theorem on group actions with
a regular cyclic subgroup.  We recall both these theorems in Section~\ref{prelims} below.

\begin{remark}
  If a cycle $\delta(t)$ is not simple, then the theorems above do
  not hold.  A counter-example is provided if $f(x)=T_p(x)$, a Chebyshev
  polynomial of prime degree.  We examine this case in detail in the
  final section.

  Similarly, the polynomial $F(x,y) = y^2 + T_p(x)$ gives a counter-example
  to a generalization of Theorems \ref{hyperellipticthm} and \ref{hyperellipticmon}
\end{remark}

\section{Preliminaries}\label{prelims}

We recall some definitions from group theory

\begin{definition}\rm
\
\begin{enumerate}
\item
  Let $G$ be a group acting on a finite set $S$.  We say that the action is
  {\it imprimitive} if there exists a non-trivial decomposition of $S$, $S = \bigcup S_i$,
  such that for each element of $g$ and each $i$, $g$ sends $S_i$ into $S_j$ for some $j$.  The
  action is called {\it primitive} if it is not imprimitive.
\item
  An action is {\it transitive} if given any pair of elements of $S$,
  $s_1$ and $s_2$, there is an element $g\in G$ which sends $s_1$ to
  $s_2$.
\item
  An action is {\it 2-transitive} if given any two pairs of elements of $S$,
  $(s_1,s_2)$ and $(s_3,s_4)$, there is an element $g\in G$ which sends $s_1$ to
  $s_3$ and $s_2$ to $s_4$.
\item
  An action is {\it regular} if given two elements $s_1$ and $s_2$ of $S$ there is
  a unique element $g$ of $G$ which sends $s_1$ to $s_2$.
\item
  Given $s\in S$, we denote the group of all elements of $G$ which fix $s$ (the
  {\it stabilizer} of $s$) by $G_s$.
\end{enumerate}
\end{definition}

The following theorem is classical, but we state it here for convenience.

\begin{theorem}[L\"uroth]\label{luroth}
Let $k(t)$ be a transcendental extension of a field $k$.  Any subfield $K\subset k(t)$,
such that $k\varsubsetneq K$, is of the form $K = k(r)$ for some $r\in k(t)$.
\end{theorem}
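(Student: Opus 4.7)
My plan is to follow the classical field-theoretic proof. I would first observe that $t$ is algebraic over $K$: any $r \in K \setminus k$, written as $r = p(t)/q(t)$ in lowest terms in $k[t]$, makes $t$ a root of the polynomial $p(X) - r q(X) \in K[X]$. Hence $n := [k(t):K]$ is finite. Let $F(X) = X^n + a_{n-1}(t)X^{n-1} + \cdots + a_0(t) \in K[X]$ be the minimal polynomial of $t$ over $K$. A useful preliminary is the observation that $K$ is generated over $k$ by the coefficients of $F$: the subfield $L := k(a_0, \ldots, a_{n-1}) \subseteq K$ satisfies $F \in L[X]$ and has $t$ as a root, so $[k(t):L] \leq n = [k(t):K]$, forcing $L = K$ by the tower law. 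Since $t$ is transcendental over $k$, at least one $a_j \notin k$, and it suffices to show that $K = k(a_j)$ for a suitably chosen such $j$.

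For such an $a_j$, write $a_j = P(t)/Q(t)$ in lowest terms and set $N_j := \max(\deg P, \deg Q)$, so that $[k(t):k(a_j)] = N_j \geq n$ by the tower law. It remains to prove the reverse inequality. I would clear denominators and divide out the content in $X$ over $k[Y]$ to form $\Phi(Y, X) \in k[Y, X]$, primitive in $X$ over $k[Y]$, with $\Phi(t, X) = c(t) F(X)$ for some $c(t) \in k[t]$. Then $\deg_X \Phi = n$ and, writing $m := \deg_Y \Phi$, each non-constant coefficient $a_i$ satisfies $N_i \leq m$ (directly from the bounds $\max(\deg A_i, \deg c) \leq m$ on the $X$-coefficients $A_i$ of $\Phi$). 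On the other hand, from $F(X) \mid P(X) - a_j Q(X)$ in $K[X]$, multiplication by $Q(t)$ and an application of Gauss's lemma to the primitive $\Phi$ yield the divisibility $\Phi(Y, X) \mid Q(Y)P(X) - P(Y)Q(X)$ in $k[Y, X]$; comparing $Y$-degrees gives $m \leq N_j$. Thus $N_j = m$ for every $j$ with $a_j \notin k$.

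It remains to derive $m = n$. Here the anti-symmetry of $R(Y, X) := Q(Y)P(X) - P(Y)Q(X)$ under the swap $X \leftrightarrow Y$ becomes essential: $\Phi(X, Y)$ as well as $\Phi(Y, X)$ divides $R(Y, X)$. Moreover, since $\Phi(Y, Y) = c(Y) F(Y)|_{Y=t}$ vanishes and $t$ is transcendental over $k$, the primitive factor $(X - Y)$ divides $\Phi(Y, X)$ in $k[Y, X]$; writing $\Phi(Y, X) = (X - Y)\Psi(Y, X)$, both $\Psi(Y, X)$ and $\Psi(X, Y)$ divide the symmetric quotient $R(Y, X)/(X - Y) \in k[Y, X]$. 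Pairing the irreducible factors of $\Psi(Y, X)$ with those of $\Psi(X, Y)$ under the swap, together with a bidegree comparison, forces $\deg_X \Psi = \deg_Y \Psi$ and hence $m = n$. Combined with $N_j = m$, this gives $K = k(a_j)$. The main obstacle is precisely this last step: Gauss's lemma alone produces only $m \leq N_j$, and without exploiting the anti-symmetry of $R$ together with the forced factor $(X-Y)$ of $\Phi$, one cannot rule out the possibility that $\Phi$ is much more unbalanced in its two variables than $F$ itself suggests. The delicate pairing of irreducible factors of $\Psi$ under $X \leftrightarrow Y$ is what ultimately pins down $m = n$.
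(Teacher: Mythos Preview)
The paper does not prove L\"uroth's theorem at all: it is introduced with the sentence ``The following theorem is classical, but we state it here for convenience'' and is used as a black box in the proofs of Proposition~\ref{monthms} and Theorem~\ref{center}. So there is no ``paper's own proof'' to compare against; your proposal is simply an independent write-up of the classical field-theoretic argument.

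Your argument is essentially the standard one and is correct up to, but not including, the final paragraph. You correctly obtain $N_j=m$ from the two inequalities $N_j\le m$ (coefficients of $\Phi$) and $m\le N_j$ (Gauss plus $Y$-degree comparison in $\Phi\mid R$). What remains is $m=n$, and here your ``pairing the irreducible factors of $\Psi(Y,X)$ with those of $\Psi(X,Y)$ under the swap, together with a bidegree comparison'' is too vague to count as a proof: knowing only that both $\Psi(Y,X)$ and $\Psi(X,Y)$ divide the symmetric polynomial $R/(X-Y)$ does not by itself force $\deg_X\Psi=\deg_Y\Psi$, and you have not said which bidegrees you are comparing or why the pairing is without overlap or leftover factors.

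The standard (and shorter) way to finish does not need $(X-Y)$ or any pairing. From $\Phi\mid R$ write $R=\Phi\cdot G$ in $k[Y,X]$. Since $\deg_Y R=N_j=m=\deg_Y\Phi$, we get $\deg_Y G=0$, i.e.\ $G\in k[X]$. But $R(Y,X)=Q(Y)P(X)-P(Y)Q(X)$ is primitive as a polynomial in $Y$ over $k[X]$: if $d(X)$ divided every $Y$-coefficient $q_iP(X)-p_iQ(X)$, then choosing $i,j$ with $p_iq_j\neq p_jq_i$ gives $d\mid P$ and $d\mid Q$, hence $d\in k^{*}$. Thus $G\in k^{*}$, so $\deg_X R=\deg_X\Phi$, i.e.\ $N_j=n$, whence $m=n$ and $K=k(a_j)$. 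If you prefer to keep the symmetry idea, you can argue instead that $S:=R/(X-Y)$ is symmetric and $S=\Psi(Y,X)G(X)=\Psi(X,Y)G(Y)$; then $G(Y)$ divides $\Psi(Y,X)G(X)$, and since $\gcd(G(X),G(Y))\in k$ and $\Psi$ is primitive over $k[Y]$, again $G\in k^{*}$. Either route replaces the hand-wavy ``pairing'' with a clean content argument.
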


\begin{proposition}\label{tower4}
  Let $G$ be a group acting transitively on a finite set $S$.  The action of $G$ on $S$
  is imprimitive if and only if for some element $s$ of $S$ there is a subgroup $H$ of
  $G$ such that
  \begin{equation}\label{tower99}
    G_s \varsubsetneq H \varsubsetneq G,
  \end{equation}
  where $G_s$ is the subgroup of $G$ of all elements which leave $s$ fixed.
\end{proposition}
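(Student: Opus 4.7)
\medskip

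\noindent\textbf{Proof proposal.} My plan is to use the standard bijection between block systems on $S$ and subgroups of $G$ sandwiched between $G_s$ and $G$: a block through $s$ will correspond to its setwise stabilizer, and, conversely, an intermediate subgroup $H$ will produce a block as the orbit $Hs$.

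For the forward direction, I would assume there is a non-trivial decomposition $S=\bigcup S_i$ preserved by $G$, pick $s\in S$, let $S_1$ be the block containing $s$, and define
$$H=\{\,g\in G\ :\ gS_1=S_1\,\}.$$
Clearly $H$ is a subgroup, and $G_s\subseteq H$ because any element fixing $s$ sends $S_1$ to some block $S_j$ which, containing $s$, must be $S_1$ itself. To see $G_s\varsubsetneq H$, use $|S_1|>1$ to choose $s'\in S_1$ distinct from $s$; by transitivity of $G$ on $S$, some $g\in G$ sends $s$ to $s'$, and since $gS_1$ and $S_1$ share the element $s'$ they must coincide (by the block property), so $g\in H\setminus G_s$. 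To see $H\varsubsetneq G$, use $S_1\neq S$ and transitivity to find an element sending $s$ outside $S_1$.

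For the converse, given $G_s\varsubsetneq H\varsubsetneq G$, I set $S_1:=Hs$ and let $S_i:=g_iS_1$ as $g_iH$ ranges over the left cosets of $H$ in $G$. The verification has three pieces. \emph{Covering and disjointness:} $\bigcup_i g_iHs = Gs = S$ by transitivity, and if $g_ih_1s=g_jh_2s$ then $h_2^{-1}g_j^{-1}g_ih_1\in G_s\subseteq H$, which forces $g_iH=g_jH$. \emph{Block property:} for any $g\in G$, writing $gg_i\in g_kH$ gives $gS_i=gg_iHs=g_kHs=S_k$. \emph{Non-triviality:} the number of blocks is $[G:H]>1$ because $H\varsubsetneq G$, and the size of the block $S_1=Hs$ equals $[H:G_s]>1$ because $G_s\varsubsetneq H$ (the stabilizer of $s$ inside $H$ is $H\cap G_s=G_s$).

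I do not anticipate a serious obstacle: the only subtle point is ensuring the two strict inclusions $G_s\varsubsetneq H\varsubsetneq G$ translate precisely into the two non-triviality conditions $|S_1|>1$ and (number of blocks)$\,>1$, and this is exactly the orbit–stabilizer computation above. Everything else is routine bookkeeping with cosets and the block-system definition.
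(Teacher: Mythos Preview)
Your proof is correct and follows essentially the same approach as the paper's: in both directions you use the setwise stabilizer of the block through $s$ (forward) and the orbit $Hs$ together with its $G$-translates (converse), with the identical coset computation $h_2^{-1}g_j^{-1}g_ih_1\in G_s\subseteq H$ establishing disjointness. Your write-up is slightly more explicit in invoking orbit--stabilizer and indexing by cosets, but there is no substantive difference.
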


\proof  Suppose that the action of $G$ on $S$ is imprimitive, and let $S_0$ be
the subset which contains $s$ in the decomposition of $S$.  We let $H$ be the
subset of $G$ consisting of all elements which fix $S_0$.  Since $S_0$ is non-trivial
it must have more than one element but be strictly contained in $S$. From the transitivity
of $G$, $H$ must be therefore strictly larger than $G_s$, but smaller than $G$.

Conversely, if \eqref{tower99} holds, we can consider the orbit of $s$ under the action
of $H$: call this $S_1$.  This cannot be the whole of $S$, or else $H$ would be
the same as $G$ (since $H$ already contains $G_s$).  However, it must contain more
elements than just $s$.  Now consider the action of $G$ on $S_1$.
If $s' \in g_1(S_1)\cap g_2(S_1)$ then there exist some $h_1,h_2\in H$ such that $g_1h_1(s)=s'=g_2h_2(s)$.
Thus $h_2^{-1}g_2^{-1}g_1h_1 \in G_s$, and hence $g_2^{-1}g_1 \in H$ and $g_1(S_1)=g_2(S_1)$.
Therefore the images of $S$ under $G$ give a partition of $S$ on which $G$ acts imprimitively.

\medskip

Recall that the affine group $\mbox{Aff}(\Z_p)$ is the group of all affine transformations
of $\Z_p$ to itself.  That is, it is the group of all maps from $\Z_p$ to itself of the form
$x\mapsto ax+b$ for $a$, $b \in \Z_p$ with multiplication given by composition.  Note that
every element of $\mbox{Aff}(\Z_p)$ fixes at most one element of $\Z_p$.  We will use this
fact in the proof of Theorem~\ref{muller}

\begin{theorem}[Burnside-Schur]\label{BS}
   Every primitive finite permutation group containing a regular cyclic subgroup is
   either 2-transitive or permutationally isomorphic to a subgroup of the affine group
   $\mbox{Aff}(\Z_p)$, where $p$ is a prime.
\end{theorem}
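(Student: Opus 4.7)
The plan is to follow Schur's classical S-ring approach. Let $G$ act faithfully and primitively on $S$ with $|S|=n$, and let $C = \langle c \rangle \le G$ be the regular cyclic subgroup. Identifying $S$ with $\Z/n\Z$ so that $c$ acts as the translation $x \mapsto x+1$, the stabilizer $G_0$ of the point $0$ is a complement of $C$ in $G$, and every $g \in G$ takes the form $g(x) = \varphi(x) + a$ for some $\varphi \in G_0$ and some $a \in \Z/n\Z$.

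First I would introduce the Schur ring associated with $G$: the $G_0$-orbits on $C$, say $T_1, \ldots, T_r$, correspond to the double cosets $C\backslash G/C$, and the class sums $\sigma_i = \sum_{t\in T_i} t$ span a subalgebra of the group algebra $\C[C]$. The crucial input is Schur's multiplier lemma, which says that for every integer $a$ coprime to $n$, the map $t \mapsto at$ permutes the classes $T_i$. This gives a strong combinatorial constraint on how the classes can sit inside $\Z/n\Z$.

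Next I would split into two cases. If $n$ is composite with a proper divisor $d$, combining the multiplier lemma with a careful counting of class sizes shows that either (a) the union of those $T_i$ that meet the unique subgroup of order $d$ is exactly that subgroup, in which case its cosets form a $G$-invariant block system contradicting primitivity; or (b) the classes refine so that $G_0$ acts transitively on $\Z/n\Z \setminus \{0\}$, i.e.\ $G$ is $2$-transitive. When $n=p$ is prime, $C$ is a Sylow $p$-subgroup, and unless $G$ is already $2$-transitive one shows via Burnside's prime-degree argument (character theory) that $C$ is normal in $G$; hence $G$ embeds in $N_{\mathrm{Sym}(\Z/p\Z)}(C) = \mathrm{Aff}(\Z/p\Z)$, since the normalizer of the translation subgroup in the symmetric group is precisely the affine group.

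The main obstacle is the composite case: using the multiplier lemma to force the dichotomy between coarsening of the partition to a proper subgroup and refinement to singletons requires delicate bookkeeping on Schur rings over $\Z/n\Z$. A modern alternative, bypassing S-rings altogether, would be to invoke the classification of finite simple groups together with the Feit--Jones classification of primitive permutation groups containing a full $n$-cycle, from which the statement can be read off directly.
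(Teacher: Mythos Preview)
The paper does not prove this theorem; its proof consists solely of the line ``See \cite{DM} or \cite{EP}.'' Your sketch, by contrast, outlines the classical argument that one actually finds in those references (and in Wielandt's \emph{Finite Permutation Groups}): Schur's S-ring method over $C_n$ for composite degree together with Burnside's character-theoretic prime-degree theorem. The outline is sound, and you correctly flag the composite case as the substantive part. One small slip: the basic sets of the Schur ring (the $G_0$-orbits on $S$) are indexed by the double cosets $G_0\backslash G/G_0$, not $C\backslash G/C$; these have different cardinalities in general. Your formulation of Schur's multiplier lemma is correct, and the dichotomy you describe---either some proper subgroup of $C$ is a union of basic sets, producing a block system, or $G_0$ is transitive on $S\setminus\{0\}$, producing $2$-transitivity---is exactly how the standard argument concludes. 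Since the paper treats Burnside--Schur purely as a black box, your proposal already goes well beyond what the paper itself supplies.
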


\proof  See \cite{DM} or \cite{EP}.

\section{Monodromy groups of polynomials}\label{monpoly}

Let $f(x)$ be a polynomial of degree $n>0$, and consider the solutions, $x_i(t)$, of
the equation $f(x)=t$.  Let $\Sigma$ be the set of critical points $t \in \C$ for which
$f(x)=t$ and $f'(x)=0$ have a common solution.  Clearly there are at most
$n(n-1)$ of these points.  As $t$ takes values in $\C\setminus\Sigma$ the functions
$x_i(t)$ are well-defined.  The group $G=\pi_1(\C\setminus\Sigma)$ acts on the $x_i(t)$.
The action is always transitive (Proposition~\ref{BS2}).

\begin{definition}\rm
   Let $G$ be as above, then the action of G on the set of $x_i$ is called
   the {\it monodromy} group of the polynomial $f$, denoted $\mbox{Mon}(f)$.
\end{definition}

\begin{proposition}\label{BS2}
   Let $f$ be a polynomial over $\C$ of degree $n$, then its monodromy group,
   $\mbox{Mon}(f)$, is transitive and has a cyclic subgroup of degree $n$
   which acts regularly on the roots of $f$.
\end{proposition}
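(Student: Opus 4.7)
\proof
The plan is to extract a cyclic subgroup of order $n$ from the monodromy by analysing the behaviour of $f$ at infinity, and then to deduce transitivity for free.

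First I would describe the roots of $f(x)=t$ for large $|t|$. Writing $f(x)=a_n x^n + a_{n-1}x^{n-1}+\cdots+a_0$ with $a_n\neq0$, the equation $f(x)=t$ has, for $|t|$ sufficiently large, $n$ distinct solutions which admit Puiseux expansions
\begin{equation*}
x_k(t) = \zeta^k\left(\frac{t}{a_n}\right)^{1/n}\bigl(1+O(t^{-1/n})\bigr),\qquad k=0,1,\dots,n-1,
\end{equation*}
where $\zeta=e^{2\pi i/n}$ is a primitive $n$-th root of unity. This is standard: one checks that the Newton polygon of $f(x)-t$, viewed as a polynomial in $x$ with coefficients in $\C((t^{-1}))$, has a single slope, producing $n$ conjugate Puiseux series of the form above.

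Next I would realise the cyclic element geometrically. Choose $R>0$ so large that all critical values of $f$ lie inside the disk of radius $R$, and consider the loop $\gamma_\infty(s)=Re^{2\pi i s}$, $s\in[0,1]$, in $\C\setminus\Sigma$. Analytic continuation along $\gamma_\infty$ sends $(t/a_n)^{1/n}$ to $\zeta\,(t/a_n)^{1/n}$, hence acts on the indexed roots by $x_k\mapsto x_{k+1\bmod n}$. The corresponding monodromy element $\sigma\in\mathrm{Mon}(f)$ is therefore an $n$-cycle on the set $\{x_0,\dots,x_{n-1}\}$ and generates a cyclic subgroup $\langle\sigma\rangle$ of order $n$.

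Finally, since $\langle\sigma\rangle$ already permutes the $n$ roots in a single orbit, and an $n$-cycle on $n$ letters acts simply transitively, the subgroup $\langle\sigma\rangle\subset\mathrm{Mon}(f)$ is a regular cyclic subgroup of degree $n$. The transitivity of $\langle\sigma\rangle$ on the roots immediately forces $\mathrm{Mon}(f)$ itself to be transitive. The main technical point to take care of is the Puiseux expansion of the roots at infinity; once that is in hand, the identification of the monodromy of the large loop as a cyclic permutation is essentially a tracking of the $n$-th root of $t$.
\hspace*{\fill}$\Box$\medskip
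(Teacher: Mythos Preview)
Your proof is correct and follows essentially the same approach as the paper: both obtain an $n$-cycle in the monodromy by examining the Puiseux expansion of the roots at infinity and tracking them around a large loop encircling all critical values, then deduce transitivity from the existence of this regular cyclic subgroup. Your version is slightly more detailed (explicit leading coefficient, Newton polygon justification, explicit parameterisation of the loop), but the underlying argument is identical.
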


\proof  The first statement follows from the second.
When $t$ is large the $x_i$ can be expanded as
\[
     x_i = \omega^r t^{1/n} + O(t^{(1/n)-1}),
\]
where $\omega$ is an $n$-th root of unity.  Thus, taking a
sufficiently large loop in $\C\setminus\Sigma$, we obtain an element of
$G$ which is an $n$-cycle.  This element generates a cyclic
subgroup of $G$ which acts regularly on the roots of $f(x)=t$.

\medskip

Elements of the monodromy group clearly lie in the Galois group
of $f(x)-t=0$ over $\C(t)$.  The following fundamental theorem \cite{For}
states that all elements of the Galois group can be generated in this
way.

\begin{theorem}\label{galois}
   The monodromy group of $f$, $\mbox{Mon}(f)$, is isomorphic to the
   Galois group of $f(x)-t$ considered as a polynomial over $\C(t)$.
\end{theorem}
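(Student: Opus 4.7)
My plan is to construct an explicit homomorphism $\phi\colon\mbox{Mon}(f)\to G$, where $G=\mbox{Gal}(L/\C(t))$ and $L=\C(t)(x_1,\ldots,x_n)$ is the splitting field of $f(x)-t$ over $\C(t)$, and then to verify that $\phi$ is a bijection. The homomorphism is defined as follows: given a loop $\gamma\in\pi_1(\C\setminus\Sigma,t_0)$, analytic continuation along $\gamma$ produces a permutation $\sigma_\gamma$ of the fibre $\{x_1(t_0),\ldots,x_n(t_0)\}$, and since every element of $L$ is a $\C(t)$-rational expression in the $x_i$, this permutation extends uniquely to a $\C(t)$-automorphism of $L$. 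The extension is well-defined because analytic continuation preserves every polynomial identity over $\C(t)$ satisfied by the roots.

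Injectivity of $\phi$ is immediate: the $x_i$ generate $L$ over $\C(t)$, so any loop whose monodromy fixes each root induces the identity on $L$, and conversely an element of the image that acts trivially on the $x_i$ must come from the trivial loop class in $\mbox{Mon}(f)$ by the faithfulness of the monodromy action on the fibre.

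For surjectivity I would appeal to Galois theory: since $L/\C(t)$ is Galois, it suffices to prove that the fixed field $L^{\mathrm{Im}(\phi)}$ equals $\C(t)$. Let $h\in L$ be invariant under all monodromy. Viewing $h$ as a $\C(t)$-rational expression in the multi-valued branches $x_i(t)$, the invariance condition says that $h$ descends to a single-valued meromorphic function on $\C\setminus\Sigma$. Since $h$ is algebraic over $\C(t)$, Puiseux expansions of the $x_i(t)$ near each puncture in $\Sigma\cup\{\infty\}$ show that $h$ has at most polynomial growth at every puncture, so it extends to a meromorphic function on $\CP^1$ and therefore lies in $\C(t)$.

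I expect the main obstacle to be this last step, namely verifying that a monodromy-invariant element of $L$ extends meromorphically across each puncture; one must use the algebraicity of the $x_i$ over $\C(t)$ together with the finiteness of the local branching to rule out essential singularities. Once $L^{\mathrm{Im}(\phi)}=\C(t)$ is secured, the fundamental theorem of Galois theory gives $\mathrm{Im}(\phi)=G$, and together with injectivity this completes the proof that $\mbox{Mon}(f)\cong G$.
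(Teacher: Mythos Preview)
Your proposal is correct and follows the standard argument for this classical result. Note, however, that the paper does not actually prove this theorem: it merely states it as a ``fundamental theorem'' and cites Forster \cite{For} for the proof. So there is no ``paper's own proof'' to compare against beyond the reference.

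That said, your sketch is essentially the argument one finds in Forster (or any standard treatment of the correspondence between covering spaces and Galois theory for Riemann surfaces). The construction of $\phi$ via analytic continuation is exactly right, and injectivity is trivial since $\mbox{Mon}(f)$ is by definition a subgroup of the permutation group of the fibre. Your surjectivity argument---showing that the fixed field $L^{\mathrm{Im}(\phi)}$ equals $\C(t)$ by exhibiting any monodromy-invariant element of $L$ as a single-valued meromorphic function on $\CP^1$---is also the standard one. The concern you raise about extending across punctures is real but easily handled: an element $h\in L$ satisfies a polynomial equation over $\C(t)$, so near each puncture its possible branches are given by finitely many Puiseux series with bounded pole order; single-valuedness then forces $h$ to have an ordinary Laurent expansion there, hence a pole at worst. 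This is precisely the step that rules out essential singularities, and your outline handles it correctly.
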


\begin{definition}\rm
   We say that a polynomial $f(x)$ is {\it decomposable} if and only if there exist
   two polynomials $g$ and $h$, both of degree greater than one, such that
   $f(x) = g(h(x))$.
\end{definition}

\begin{lemma}\label{lem5.0}
   Suppose that $f(x)$ is a polynomial over $\C$ which can be expressed
   as $g(h(x))$ for $g$ and $h$ rational functions of degree greater than
   one over $\C$, then there is a decomposition $f(x)=\tilde{g}(\tilde{h}(x))$, where
   $\tilde{g}$ and $\tilde{h}$ can be chosen to be polynomials over $\C$.
\end{lemma}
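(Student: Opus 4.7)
The plan is to exploit the characterization of polynomials among rational self-maps of $\mathbb{CP}^1$: a rational function $r$ is a polynomial if and only if $r^{-1}(\infty) = \{\infty\}$ as a set, i.e., $\infty$ is the unique pole.

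The first step is to extract, from the assumption that $f = g \circ h$ is a polynomial, information about where $g$ and $h$ send $\infty$. Since $f^{-1}(\infty) = \{\infty\}$ and $f^{-1}(\infty) = h^{-1}(g^{-1}(\infty))$, the finite set $g^{-1}(\infty) \subset \mathbb{CP}^1$ must have $h$-preimage equal to $\{\infty\}$, which forces $g^{-1}(\infty)$ to consist of a single point $p$, and moreover $p = h(\infty)$ with $h^{-1}(p) = \{\infty\}$.

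The second step is to normalize by a Möbius transformation. Choose $\phi \in \mathrm{PGL}_2(\mathbb{C})$ with $\phi(p) = \infty$, and set
\[
  \tilde{h} = \phi \circ h, \qquad \tilde{g} = g \circ \phi^{-1}.
\]
Clearly $\tilde{g} \circ \tilde{h} = f$, and both have the same degrees as $g$ and $h$. Using the first step, $\tilde{h}^{-1}(\infty) = h^{-1}(\phi^{-1}(\infty)) = h^{-1}(p) = \{\infty\}$, so $\tilde{h}$ is a polynomial; similarly $\tilde{g}^{-1}(\infty) = \phi(g^{-1}(\infty)) = \phi(\{p\}) = \{\infty\}$, so $\tilde{g}$ is a polynomial.

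There is no real obstacle here: the whole content is the observation that the unique total ramification of a polynomial at $\infty$ propagates through compositions, which pins down the poles of $g$ and $h$ up to a single Möbius adjustment. The only point requiring a moment's care is verifying that $g^{-1}(\infty)$ is genuinely one point (not just that $h$ hits everything in $g^{-1}(\infty)$ only from $\infty$); this follows because $h$ is non-constant so its image is all of $\mathbb{CP}^1$, forcing every element of $g^{-1}(\infty)$ to be in the image of the singleton $\{\infty\}$ under $h$.
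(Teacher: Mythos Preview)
Your proof is correct. Both you and the paper begin with the same M\"obius normalization---composing $h$ on the left and $g$ on the right by a M\"obius map so that $h(\infty)$ is moved to $\infty$---but you diverge in how you finish. The paper argues algebraically: it writes $\tilde h=\tilde r/\tilde s$ with $\deg\tilde s<\deg\tilde r$, factors the numerator and denominator of $\tilde g$ over $\C$, substitutes, and then argues that because the resulting expression is a polynomial and admits no further cancellation, each linear factor $\gamma_i\tilde r+\delta_i\tilde s$ in the denominator must be constant, forcing $\gamma_i=0$ and $\tilde s$ constant. You instead use the geometric characterization of polynomials as rational self-maps of $\CP^1$ with $r^{-1}(\infty)=\{\infty\}$, and read off directly from $h^{-1}(g^{-1}(\infty))=\{\infty\}$ and the surjectivity of $h$ that $g^{-1}(\infty)$ is the single point $h(\infty)$. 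Your route is shorter and more conceptual, and it makes transparent why a single M\"obius adjustment suffices; the paper's computation is more explicit but has to manage the bookkeeping of possible cancellations between numerator and denominator factors.
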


\proof  Let $h(x)=r(x)/s(x)$, where $r$ and $s$ are polynomials over $\C$.
Without loss of generality, if $m$ is a M\"obius transformation, we can
rewrite the decomposition of $f$ as $f = \tilde{g}\circ \tilde{h}$ with $\tilde{g} = g\circ m^{-1}$ and
$\tilde{h}=m\circ h$.
In this way, we can assume that $\tilde{h}=\tilde{r}/\tilde{s}$, with $\deg(\tilde{s})<\deg(\tilde{r})$, and
both $\tilde{r}$ and $\tilde{s}$ monic.  Now,
\[
     \tilde{g}(\tilde{h}(x)) =  \frac {  \prod_{i=1}^q \alpha_i\tilde{r}(x)+\beta_i\tilde{s}(x) }
                      {  \prod_{i=1}^q \gamma_i\tilde{r}(x)+\delta_i\tilde{s}(x)},
\]
for some constants $\alpha_i,\beta_i,\gamma_i,\delta_i\in \C$.
If $\alpha_i\tilde{r}+\beta_i\tilde{s}$ shares a common factor with
$\gamma_j\tilde{r}+\delta_j\tilde{s}$, these two polynomials must be the same
up to a constant multiple, whence we can assume that the fraction above allows no
further cancelations. Since $\tilde{g}\circ\tilde{h}$ is a
polynomial, $\prod\gamma_i\tilde{r}+\delta_i\tilde{s}$ must be a constant, and
hence the denominator has no dependence on $\tilde{r}$, and $\tilde{s}$ must be a
constant (and therefore $\tilde{s}=1$).  The result follows directly.

\begin{proposition}\label{monthms}
   Let $f$ be a polynomial as above and let $G = \mbox{Mon}(f)$ be its monodromy
   group.  Then
   \begin{list}{(\roman{listcount})\hfill}{\usecounter{listcount} \setlength{\leftmargin}{4em} \setlength{\labelwidth}{2em}}
   \item the action of $G$ is imprimitive if and only
   if the polynomial $f$ is decomposable.
   \item the action is
   2-transitive if and only if the divided differences polynomial
   \[\Delta(x,y)=(f(x)-f(y))/(x-y)\] is irreducible.
   \end{list}
\end{proposition}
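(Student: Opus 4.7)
The plan is to use the Galois correspondence (Theorem~\ref{galois}) to translate each group-theoretic statement into a statement about fields and polynomials.

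For part (i), I would apply Proposition~\ref{tower4}: since $G = \mathrm{Mon}(f)$ is already transitive (Proposition~\ref{BS2}), the action is imprimitive if and only if there is a subgroup strictly between the stabilizer $G_{x_i}$ of a fixed root $x_i$ and $G$. Via Theorem~\ref{galois} this is the Galois group of $f(x)-t$ over $\C(t)$, and the stabilizer $G_{x_i}$ corresponds to the fixed field $\C(x_i)$. The Galois correspondence then turns the tower $G_{x_i} \varsubsetneq H \varsubsetneq G$ into a tower of fields $\C(t) \varsubsetneq K \varsubsetneq \C(x_i)$. Since $x_i$ is transcendental over $\C$, the field $\C(x_i)$ is a purely transcendental extension of $\C$, so L\"uroth's theorem (Theorem~\ref{luroth}) produces $K = \C(r(x_i))$ for some rational function $r$. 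The condition $t \in K$ means $t = f(x_i) = g(r(x_i))$ for some rational $g$, so $f = g \circ r$ as rational functions. Lemma~\ref{lem5.0} then converts this to a polynomial decomposition, and I am done with one direction. The converse is immediate: a polynomial decomposition $f = g \circ h$ gives the intermediate field $\C(h(x_i))$, which (since $\deg h, \deg g > 1$) is strictly between $\C(t)$ and $\C(x_i)$.

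For part (ii), I would use the standard fact that a transitive action is 2-transitive if and only if the stabilizer $G_{x_i}$ acts transitively on the complement $\{x_j : j \neq i\}$. The key observation is that these complementary roots are exactly the roots of $\Delta(x_i,y) = 0$, because $f(y) - f(x_i) = (x_i - y)\Delta(x_i,y)\cdot(-1)$ up to sign, so $\Delta(x_i,y) = 0$ forces $f(y) = t$ with $y \neq x_i$. By Galois theory applied to the splitting field of $f(x)-t$ over $\C(x_i)$, the group $G_{x_i}$ permutes these roots transitively if and only if $\Delta(x_i,y)$ is irreducible over $\C(x_i)$. Because $x_i$ is transcendental over $\C$, the ring $\C(x_i)[y]$ is isomorphic to $\C(x)[y]$ under $x_i \leftrightarrow x$, so this is equivalent to $\Delta(x,y)$ being irreducible in $\C(x)[y]$.

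Finally, I would apply Gauss's lemma to descend from irreducibility in $\C(x)[y]$ to irreducibility in $\C[x,y]$. Viewing $\Delta(x,y) \in \C[x][y]$, its leading coefficient in $y$ is the constant $a_n$ (the leading coefficient of $f$), so its content in $\C[x]$ is a unit; hence $\Delta(x,y)$ is irreducible in $\C[x,y]$ if and only if it is irreducible in $\C(x)[y]$, completing part (ii).

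The main obstacle I anticipate is part (i): one must be careful that L\"uroth naturally produces \emph{rational} intermediate functions, not polynomial ones, so Lemma~\ref{lem5.0} is essential to reach the stated polynomial decomposition. The rest is bookkeeping with the Galois correspondence and Gauss's lemma.
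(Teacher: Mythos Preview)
Your proposal is correct and follows essentially the same route as the paper: for (i) the paper likewise applies Proposition~\ref{tower4}, the Galois correspondence, L\"uroth's theorem, and then Lemma~\ref{lem5.0} to pass from rational to polynomial decompositions; for (ii) the paper also reduces 2-transitivity to the irreducibility of $\Delta(x_i,y)$ over $\C(x_i)$ via the action of the stabilizer. Your write-up is in fact slightly more careful than the paper's in making explicit the Gauss's lemma step that identifies irreducibility in $\C(x)[y]$ with irreducibility in $\C[x,y]$.
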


\proof
\begin{list}{(\roman{listcount})\hfill}{\usecounter{listcount} \setlength{\leftmargin}{4em} \setlength{\labelwidth}{2em}}

\item Let $t\in\C\setminus\Sigma$, let $s$ be a root of $f(x)-t$, and $G_s$ the stabilizer of $s$.
From Proposition~\ref{tower4} we have
\begin{equation}\label{GHG}
    G_s \varsubsetneq H \varsubsetneq G.
\end{equation}
The splitting field of $f(x)-t$ over $\C(t)$ is just $\C(x_1(t),\ldots,x_n(t))$.
Under the Galois correspondence, we have
\begin{equation}\label{tower2.0}
     \C(x_k(t)) \varsupsetneq K \varsupsetneq \C(t),
\end{equation}
where $K$ is the fixed field of $H$, and $x_k(t)$ is the
root of $f(x)=t$ corresponding to $s$.
From L\"uroth's theorem, we must have $K = \C(r(x_k))$, for some rational
function $r$ over $\C$.
Then \eqref{tower2.0} implies that $t = s(r(x_k))$ for some rational
function $s$.  Thus $f(x) = s(r(x))$, and Lemma~\ref{lem5.0} shows
that $s$ and $r$ can in fact be chosen to be polynomials.
Conversely, given a decomposition $f(x)=s(r(x))$, we take $K=\C(r(x))$
and obtain \eqref{GHG} from \eqref{tower2.0}
via the Galois correspondence.
\item  Let $y=x_1$ be a root of $f(x)-t=0$.  Then, for any other
root $z$ of $f(x)=t$, we must have
\[f(z)-f(y)=0=(z-y)R(z,y),\]
for some polynomial $R(x,y)$, which must therefore contain the minimal
polynomial for $z$ over $\C(y,t)=\C(y)$.  Clearly, $G$ is 2-transitive
if and only if there is an automorphism of $\C(x_1,\ldots,x_n)$ which
fixes $y$, and sends $z$ to any of the roots $x_2$ to $x_n$.  In turn,
this can happen if and only if the polynomial $R$ is irreducible.
\end{list}

\begin{definition}\rm
   The unique polynomial $T_n(x)$ which satisfies
   $T_n(\cos(\theta))=\cos(n\theta)$
   is called the {\it Chebyshev} polynomial of degree $n$.
   Equivalently $T_n((z+z^{-1})/2) = (z^n+z^{-n})/2$.
\end{definition}

From the definition, the Chebyshev polynomial $T_n$ has $n-1$
distinct turning points when $T_n=\pm 1$.
Conversely, it can be shown that any polynomial $T(x)$ with just
two critical values and with all turning points distinct must
be equivalent to $T_n(x)$ for some $n$ after pre- and post- composition
with suitable linear functions.

We would like to thank Peter M\"uller for bringing the following
result to our attention.  We give a proof for completeness.

\begin{theorem}\label{muller}
   Let $f(x)$ be a polynomial of degree $n$ and $G=\mbox{Mon}(f)$,
   then one of the following holds.
   \begin{list}{(\roman{listcount})\hfill}{\usecounter{listcount} \setlength{\leftmargin}{4em} \setlength{\labelwidth}{2em}}
      \item The action of $G$ on the $x_i$ is 2-transitive
      \item The action of $G$ on the $x_i$ is imprimitive
      \item $f$ is equivalent to a Chebyshev polynomial $T_p$ where $p$ is prime.
      \item $f$ is equivalent to $x^p$ where $p$ is prime.
   \end{list}
\end{theorem}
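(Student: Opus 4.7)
The plan is to read the theorem off Burnside--Schur together with a Riemann--Hurwitz count. By Proposition \ref{BS2}, $G=\mbox{Mon}(f)$ is transitive and contains a regular cyclic subgroup. If $G$ is imprimitive we are in case (ii), so we may assume $G$ is primitive. Theorem \ref{BS} then leaves two possibilities: either $G$ is $2$-transitive (case (i)), or $G$ is permutationally isomorphic to a subgroup of $\mbox{Aff}(\Z_p)$ for some prime $p$. In the latter situation the action of $G$ on the $n$ roots of $f(x)-t$ must coincide with the natural action of $\mbox{Aff}(\Z_p)$ on $\Z_p$, so $n=p$ and the degree of $f$ is prime. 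The remaining task is to show that in the $\mbox{Aff}(\Z_p)$ case $f$ is equivalent to $x^p$ or to $T_p$.

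Here I would exploit the fact (highlighted just before Theorem \ref{BS}) that every non-identity element of $\mbox{Aff}(\Z_p)$ is either a translation $x\mapsto x+b$, which is a $p$-cycle fixing no point, or an affine map $x\mapsto ax+b$ with $a\ne 1$, which fixes exactly one point and permutes the remaining $p-1$ points in cycles of common length $d=\mbox{ord}(a)$, a divisor of $p-1$. Applied to the local monodromy $\sigma_c$ around each finite critical value $c$ of $f$, Riemann--Hurwitz (infinity is totally ramified, contributing $p-1$) yields
\[
\sum_c (p-k_c)=p-1,
\]
where $k_c$ is the number of orbits of $\sigma_c$ on the $p$ roots.

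If some $\sigma_c$ is a translation, then $k_c=1$ and its contribution $p-1$ already saturates the sum, forcing every other $\sigma_{c'}$ to be trivial. Thus $f$ has a single finite critical value with a single, totally ramified, preimage, so $f(x)-c=\alpha(x-\beta)^p$ and $f$ is equivalent to $x^p$ (case (iv)). Otherwise every $\sigma_c$ is of the second type, with one fixed point and $(p-1)/d_c$ cycles of length $d_c$, so $k_c=1+(p-1)/d_c$ and the Riemann--Hurwitz identity collapses to
\[
\sum_c\Bigl(1-\frac{1}{d_c}\Bigr)=1.
\]
Each summand lies in $[1/2,1)$, so there is at most one and at least two critical values; hence exactly two, and then $1/d_1+1/d_2=1$ forces $d_1=d_2=2$. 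Consequently $f$ has exactly two critical values, and over each the only ramified preimages are simple critical points ($(p-1)/2$ of them plus one unramified root). The classical characterization of Chebyshev polynomials recalled just before the theorem then identifies $f$ as affinely equivalent to $T_p$ (case (iii)).

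The main obstacle is the Riemann--Hurwitz bookkeeping: correctly translating between the cycle structure of the permutation $\sigma_c\in\mbox{Aff}(\Z_p)$ and the ramification profile of $f$ over $c$, and making sure the decomposable/primitive dichotomy really does match case (ii) against cases (i), (iii), (iv). Once that is in place the arithmetic is forced, and the only external ingredient is the Chebyshev rigidity statement already quoted in the text.
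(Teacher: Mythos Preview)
Your argument is correct and follows essentially the same route as the paper: Burnside--Schur reduces the primitive case to a subgroup of $\mbox{Aff}(\Z_p)$, and a ramification (Riemann--Hurwitz) count then forces the $x^p$ or $T_p$ profile. The paper's bookkeeping is slightly different---it uses only the fact that each element of $\mbox{Aff}(\Z_p)$ fixes at most one point and counts turning-point multiplicities directly, whereas you exploit the full cycle structure (one fixed point plus equal-length cycles of order $d$), which makes the endgame $\sum(1-1/d_c)=1$ cleaner. One slip: ``there is at most one and at least two critical values'' should read ``at most two and at least two''.
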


\begin{remark}
In particular, the question of whether $f$ is a composite
polynomial or not, can be solved very simply by considering
whether or not the divided differences polynomial factorizes
or not, having excluded the two exceptional cases above.
\lq\lq Equivalence" refers to pre- and post- composition by linear
functions.
\end{remark}

\proof
From Proposition~\ref{BS2}, we can apply the Burnside-Schur Theorem to show
that the group must be 2-transitive, imprimitive, or a subgroup of
$\mbox{Aff}(\Z_p)$.  In the latter case we note that $n=p$, and every
element of $\mbox{Aff}(\Z_p)$ fixes at most one element of
$\Z_p$.  This means that for every critical value of $f$
there is at most one $x_i$ that remains fixed as we turn around
this value.

Now, suppose $f$ has $r$ distinct critical values,
$t_1,\ldots,t_r$, and $f$ has $r_i$ distinct turning points
associated to the critical value $t_i$.  Let the
multiplicities of the roots of $f'$ at these turning points
be $m_{i,1},\ldots,m_{i,r_i}$.  Since a root of multiplicity
$m_{i,j}$ gives a cycle of order $m$, then for all
$i$ we must have
\begin{equation}
  n-1 \le \sum_{j=1}^{r_i} (m_{i,j}) \le n,
\end{equation}
since at most one of the $x_i$ remains fixed when turning
around each critical value.
Summing these equations over $i$ we obtain
\begin{equation}
  r(n-1) \le \sum_{i=1}^r\sum_{j=1}^{r_i} (m_{i,j}) \le rn.
\end{equation}
But the number of turning points of $f$ counted with multplicity
is just the sum of the $m_{i,j}$, and hence
\begin{equation}\label{sumrn}
   r(n-1) \le (n-1) + \sum_{i=1}^r{r_i} \le rn.
\end{equation}
Since the sum of the $r_i$ is at most $n-1$ we must have
$r \le 2$.

If $r=1$, then \eqref{sumrn} shows that $r_1 = 1$, and therefore
$f(x)$ must have a root of multiplicity $n$.  This is just Case (iv),
noting that $n$ is prime.

If $r=2$ we need $n-1 \le r_1 + r_2 \le n+1$.  But since $r_i$ can
be no more than $n/2$ this means that both $r_i$ lie between $(n-1)/2$ and $n/2$.
This implies that every turning point must have multiplicity 1 and
the polynomial must be Chebyshev with $n$ prime.

\section{Proof of the $0$-dimensional theorems}

Having dealt with the preliminaries, the proof of the $0$-dimensional
theorems are straight forward.

\subsection{Proof of Theorem~\ref{center}}

Let $\delta(t)=x_i(t)-x_j(t)$ be a simple cycle, and
let ${\cal{F}}\subset \C(x)$ denote the field of all rational
functions $R\in \C(x)$ for which $R(x_i(t))=R(x_j(t))$.
Clearly $\C \subset {\cal{F}}$, and from
the hypothesis of the theorem, $f$ and $\omega$ both
lie in $\cal{F}$, so it contains at least one non-trivial
element.  However, $x$ does not lie in ${\cal{F}}$, so we
have
\[ \C \varsubsetneq {\cal{F}} \varsubsetneq \C(x). \]

By L\"uroth's theorem, there exists a non-trivial
rational function $h(x)$, necessarily of degree greater
than one, such that $\cal{F}$ is generated
by $h(x)$.  In particular, $f$ and $\omega$ are rational
functions of $h(x)$.  However, by Lemma~\ref{lem5.0}
this implies that after a M\"obius transformation,
the generator $h(x)$ can be taken to be a polynomial,
and $f$ and $\omega$ are polynomials of $h(x)$.  Since $h(x)$
lies in $\cal{F}$, $h(x_i)=h(x_j)$ and we are done.

\subsection{Proof of Theorem~\ref{0dimmon}}

Let $\delta(t)=x_i(t)-x_j(t)$ be a simple cycle, and
let $G = \mbox{Mon}(f)$ be the monodromy group of $f$.
Consider the graph with vertices $x_1,\ldots,x_n$ and
whose edges consist of all pairs $\{x_r,x_s\}$ for which
there exists a $\sigma$ in $G$ such that
$\{\sigma(x_i),\sigma(x_j)\} = \{x_r,x_s\}$.
Every vertex lies on at least one edge, since $G$ is transitive.

If two roots $x_r$ and $x_s$ lie in a connected component of
the graph, then it is clear that we can obtain $x_r-x_s$ as
a sum of terms of the form $\pm(\sigma_k(x_i)-\sigma_k(x_j))$.
Thus, if the monodromy of the cycle $\delta(t)$ does not
generate the whole of $H_0(f^{-1},\Z)$, then there must be
more than one connected component of the graph.  Let $S$ be
the connected component of the graph which contains $x_i$ and
$x_j$.

Each element of $G$ gives an automorphism of the graph in
a natural way.  Take $H$ to be a subgroup of $G$ which sends
$S$ to itself.  Clearly $H$ contains $G_{x_i}$ and also some
element, $\sigma_{ij}$, which sends $x_i$ to $x_j$.
However, if the graph is not
connected, $H$ is strictly smaller than $G$.  Thus, from
the proof of Proposition \ref{monthms}, $f(x)$ is decomposable
with $f(x)=g(h(x))$, where $h(x)$ generates the fixed field
of $H$.  Finally, $h(x_j)=\sigma_{ij}(h(x_i))=h(x_i)$, since
$\sigma_{ij}$ lies in $H$.

\section{The tangential center focus problem in the hyperelliptic case}

\begin{proposition}\label{preparation}
  Let $\omega$ be a polynomial 1-form, and $F(x,y)=y^2+f(x)$ a
  polynomial.  Then, there exists polynomials $A,B\in \C[x,y]$
  and $g\in C[x]$, such that
  \[\omega = A\,dF + dB + yg\,dx.\]
\end{proposition}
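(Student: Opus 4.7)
The plan is to reduce $\omega$ monomial-by-monomial modulo $A\,dF+dB$, showing each monomial equals a form of type $yg(x)\,dx$ (with $g\in\C[x]$) or zero, modulo $A\,dF+dB$. It suffices to handle monomials $x^iy^j\,dx$ and $x^iy^j\,dy$. The $\,dy$-monomials reduce directly to $\,dx$-monomials: for $x^iy^{2m}\,dy$, I would use $y^{2m}\,dy = d(y^{2m+1}/(2m+1))$ and integrate by parts, yielding an exact form plus an odd-$y$-degree $\,dx$-monomial; for $x^iy^{2m+1}\,dy = (x^iy^{2m})\cdot y\,dy = (x^iy^{2m}/2)(dF-f'\,dx)$, I obtain an $A\,dF$-piece plus an even-$y$-degree $\,dx$-monomial.

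For an even-$y$-degree $\,dx$-monomial $x^iy^{2m}\,dx$, I rewrite $y^{2m}=(F-f)^m$ to express the form as $U(x,F)\,dx$ with $U\in\C[x,F]$. Writing $U=\sum_k U_k(x)F^k$ and letting $W_k$ be a primitive of $U_k$, the identity $d(\sum_k W_kF^k) = U(x,F)\,dx + (\sum_k kW_kF^{k-1})\,dF$ expresses it as $dB - A\,dF$.

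The key step is reducing an odd-$y$-degree $\,dx$-monomial $x^iy^{2m+1}\,dx = yV(x,F)\,dx$, with $V=x^i(F-f)^m\in\C[x,F]$. I claim, by strong induction on the $F$-degree $N$ of $V$, that $yV(x,F)\,dx$ reduces to $yg(x)\,dx$ modulo $A\,dF+dB$. The base case $N=0$ is immediate. For the inductive step, it is enough to reduce a top-degree piece $yh(x)F^N\,dx$. Starting from $d(H(x)yF^N) = h(x)yF^N\,dx + H(x)F^N\,dy + NH(x)yF^{N-1}\,dF$ (with $H'=h$), I would expand $HF^N\,dy$ using the binomial identity $F^N=\sum_j\binom{N}{j}y^{2j}f^{N-j}$ together with $y^{2j}\,dy = d(y^{2j+1})/(2j+1)$, then integrate by parts once more. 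The $j=N$ summand yields $hy^{2N+1}/(2N+1)\,dx$, and expanding $y^{2N+1} = yF^N + y\cdot((F-f)^N-F^N)$ shows that $yh(x)F^N\,dx$ reappears on the right with coefficient $1/(2N+1)$; all other $\,dx$-terms have $F$-degree $\leq N-1$. Solving the resulting equation (dividing by $2N/(2N+1)\neq 0$) reduces $yh(x)F^N\,dx$ to a relatively exact form plus lower-$F$-degree pieces, closing the induction.

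The main obstacle is ensuring that the integration by parts actually yields a nontrivial reduction rather than a tautology. The crucial structural feature of the chosen identity is that $yh(x)F^N\,dx$ reappears with coefficient strictly less than $1$, namely $1/(2N+1)$, so the equation can be solved algebraically, and the remaining $\,dx$-terms have genuinely smaller $F$-degree. Once this is verified, the induction telescopes down to the desired form $yg(x)\,dx$.
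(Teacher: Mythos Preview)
Your argument is correct, but it is considerably more elaborate than the paper's. The paper first observes in one line that any polynomial $1$-form can be written as $\omega=dB'+A'(x,y)\,dx$ (just integrate the $dy$-coefficient in $y$); this replaces your entire case analysis of $\,dy$-monomials. It then uses the single identity
\[
a(x)y^{m+2}\,dx \;=\; \tfrac{m+2}{2}\,A(x)f'(x)\,y^{m}\,dx \;+\; d\bigl(A(x)y^{m+2}\bigr)\;-\;\tfrac{m+2}{2}\,A(x)y^{m}\,dF,
\]
with $A'=a$, to lower the $y$-degree of each term $a(x)y^{m}\,dx$ by~$2$ at every step. Iterating brings even-$y$ terms to $g_0(x)\,dx=d\!\left(\int g_0\right)$ and odd-$y$ terms to $yg(x)\,dx$, finishing the proof in two sentences.

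By contrast, you substitute $y^{2}=F-f$ and induct on the $F$-degree, with a step in which the top term $yh(x)F^{N}\,dx$ reappears with coefficient $1/(2N+1)$ and must be solved for. This works, and your treatment of the even-$y$ case via $U(x,F)\,dx=dB-A\,dF$ is tidy; but the odd-$y$ induction is doing, in a roundabout way, exactly what the paper's one-line identity does directly. The paper's descent in $y$-degree is equivalent to your descent in $F$-degree (since $y^{2m}=(F-f)^{m}$), but avoids the binomial expansion, the second integration by parts, and the ``recycling'' trick. Neither approach gains anything the other lacks; the paper's is simply shorter.
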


\proof  First, it is clear that we can write
$\omega = dB'(x,y) + A'(x,y)\,dx$ for
an appropriate choice of polynomials $A',B'\in\C[x,y]$.
Then, using inductively the identity
\[ a(x)y^{n+2}\,dx = \frac{(n+2)}{2}A(x)f'(x)y^{n}\,dx
+ d(A(x)y^{n+2}) - \frac{(n+2)}{2}A(x)y^n\,dF,  \]
where $A(x)$ is a primitive of $a(x)$, we obtain
the result.
\smallskip

We now prove Theorem~\ref{hyperellipticthm}: from
Proposition~\ref{preparation} we need only consider
the case $\omega = y k(x)\,dx$.
\smallskip

Without loss of generality, we can assume that the tangential
center is at the origin, and that we have scaled $x$ so that
$f(x) = x^2 + O(x^3)$.  We define an analytic function $X$ to
be the unique solution of the equation
\[X^2 = f(x), \qquad X = x + O(x^2).\]

With respect to the coordinates $(X,y)$, the vanishing cycles
can be reparameterized to give the circles $X^2+y^2=t$.
Furthermore, our Abelian integral \eqref{ab} becomes
\[
   \int_{\delta(t)} y k(x)\,dx = \int_{X^2+y^2=t} y m(X)X\,dX,
\]
where
\[
    m(X(x)) = \frac{2k(x)}{f'(x)}.
\]
Clearly this integral vanishes for small values of $X$ if and
only if $k(0)=0$, and $m(X)$ is even in $X$.  That is,
\[
    m(X(x)) = \phi(X(x)^2) = \phi(f(x)),
\]
for some analytic function $\phi$.  Thus,
\[
    2k(x) = f'(x) \phi(f(x)).
\]

Taking $\Phi$ to be a primitive of $\phi$ with $\Phi(0)=0$, and
$K$ to be a primitive of $2k$ with $K(0)=0$, we obtain
\[
    K(x) = \Phi(f(x)).
\]

Now, this means that $K(x)$ vanishes with respect to the
cycle defined by $f(x) = X^2$, and by the proof of
Theorem \ref{center} in the previous section, we must have
both $K$ and $f$ to be composites of a common polynomial
$h(x)$: $K = r\circ h$ and $f=g\circ h$.

Finally, taking $\pi(x,y) = (h(x),y) = (z,y)$, we find
\[
    \omega = yk(x)\,dx = yr'(h(x))h'(x)\,dx = \pi^*(yr'(z)\,dz).
\]
This concludes the theorem once we note that the vanishing
cycle is pushed forward to a cycle homotopic to zero in the
$(z,y)$ coordinates.  This is true as they lie on a family of parabolas
$X+y^2 = t$.

\section{The monodromy problem in the hyperelliptic case}

We consider the level curves of the hamiltonian $H = y^2 - f(x) = t$
as a two sheeted covering of the complex plane $\C$ given by
projection onto the $x$-axis.  The sheets ramify at the roots of $f(x)=t$.
Taking $\Sigma$ to be the set of critical points as above, we let $t$
vary in $\C\setminus\Sigma$,
and follow the effect on the homology group $H_1(F^{-1}(t),\Z)$.
We wish to relate this group to the monodromy group of the polynomial
$f(x)$.  As $x$ tends to infinity along the positive real axis, we can
distinguish the two sheets as ``upper" and ``lower" depending on whether
$y=\pm x^{n/2}$.  We let $\tau$ denote the deck transformation which takes
$y$ to $-y$ fixing $x$.

\begin{figure}[!h]
\begin{center}
\includegraphics[height=7cm]{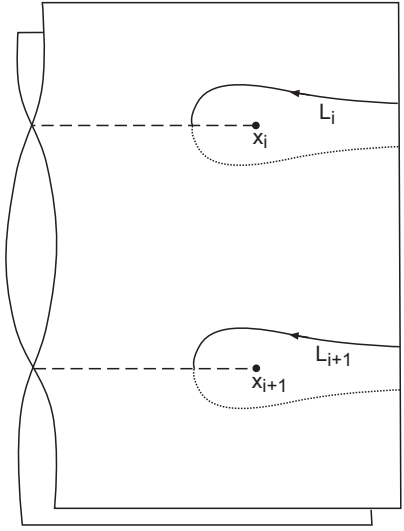}
\caption{The loops $L_i$} \label{modulus0}
\end{center}
\end{figure}

Let $H^c_1(F^{-1}(t),\Z)$ represent the homology with closed support of $F^{-1}(t)$
over $\Z$.  This can be obtained from $H_1(F^{-1}(t),\Z)$ by adding unbounded
closed curves.  Let $x_i(t)$ be the roots of $f(x)=t$.
Generically, the $x_i$ will having distinct imaginary parts, and so
any closed path in $\C\setminus\Sigma$ can be deformed so that only two of
the $x_i$'s have the same imaginary part at the same time.  In other words,
we can decompose every element of $\mbox{Mon}(f)$ as a number of swaps of
$x_i$'s with neighboring real values.

Suppose that the $x_i$ are initially numbered in order of decreasing
imaginary part for a value of $t$ close to zero.
We let $L_i$ represent the path from infinity (from
the direction of the positive real axis) on the upper sheet,
turning around $x_i$ in the positive direction and returning to
infinity on the lower sheet.
Clearly $\tau(L_i)+L_i$ is homotopic to zero, and so the $L_i$ generate $H^c_1(F^{-1}(t),\Z)$.
Furthermore, the elements $L_i-L_{i+1}$ generate $H_1(F^{-1}(t),\Z)$.

The effect of a swap of $x_i$ and $x_{i+1}$ is to take
$L_{i+1}$ to $L_i$ and $L_i$ to $2L_i-L_{i+1}$.
This is a little too complex to analyze in general, except
for very specific systems.  Instead we shall work for the moment
over $\Z_2$.  That is, we consider the images of the $L_i$ in
$H^c_1(F^{-1}(t),\Z_2)$ and $H^c_1(F^{-1}(t),\Z_2)$.

Working modulo 2 means that a swap of $x_i$ and $x_{i+1}$
takes $L_{i+1}$ to $L_i$ and $L_i$ to $L_{i+1}$.  That is,
the action of $\mbox{Mon}(f)$ on the $L_i \pmod{2}$ is
exactly the same as the action on the $x_i$.

We now apply the results of Theorem~\ref{muller}
in order to prove Theorem~\ref{hyperellipticmon}.
According to Theorem~\ref{muller} we only need to
consider four cases.

We shall show below that the Cases (i) and (ii) of
Theorem~\ref{muller} correspond to Cases (i)
and (ii) of Theorem~\ref{hyperellipticmon}.
Case (iii) can be dealt with by adapting the proof for case (i),
and in Case (iv) the Hamiltonian does not have a Morse point,
and hence there are no tangential centers.

\medskip

\noindent{\bf Case (i)/(iii)} If the monodromy group of
$f$ is 2-transitive then we can find a transformation
which takes any two $x_i$'s to any other two.
Since, working modulo two, the action on the loops
$L_i$ is the same as the action on the $x_i$, we can
find an element of the monodromy group which takes
$L_i-L_{i+1}$ to $L_{j}-L_{j+1}$ modulo 2 for all $i$ and $j$.

Now, the vanishing cycle $\delta(t)$ occurs at the coalescence
of two of these $x_i$'s and so must correspond to one of
the $L_k-L_{k+1}$ for some $k$.
Thus, there exist paths $\ell_i$ in $\C\setminus\Sigma$
such that
\[ \sigma(\ell_i)\delta(t) = L_i-L_{i+1}  \pmod{2}, \]
for all $i$.

Now let $N = 2\lfloor (n-1)/2\rfloor$.  Then
$L_i-L_{i+1}$ form a basis of $H_1(F^{-1}(t),\Z)$.
From the discussion above, we have
\[\left(
    \begin{array}{c}
      \sigma(\ell_1)\delta(t) \\
      \sigma(\ell_2)\delta(t) \\
      \vdots \\
      \sigma(\ell_N)\delta(t) \\
    \end{array}
  \right)
  =
  A
  \left(
    \begin{array}{c}
      L_1-L_2 \\
      L_2-L_3 \\
      \vdots \\
      L_N-L_{N+1} \\
    \end{array}
  \right),
\]
where the matrix $A$ reduces to the identity matrix if we
reduce modulo 2.  In particular, $A$ is invertible, and we can express
the basis of $H_1(F^{-1}(t),\Z)$ as sums of the $\sigma(\ell_i)\delta(t)$
with coefficients in $\Q$.  That is, $\delta(t)$ generates $H_1(F^{-1}(t),\Q)$.
This gives us Case (i) of Theorem~\ref{hyperellipticmon}.

Note that in Case (iii), the monodromy group is not 2-transitive.  However
it is still possible to generate each of the $L_i-L_j$ over $\Z_2$ as a sum
of $\sigma(\ell_k)\delta(t)$.  This follows directly from the $\Z_p$ action
of Proposition \ref{BS2} on the roots, and hence on the $L_i$ over $\Z_2$.
The proof then proceeds as above.

\medskip

\noindent{\bf Case (ii)}  In this case, we can assume that
$\mbox{Mon}(f)$ is imprimitive (but not $2$-transitive) on
the roots of $f(x)=t$.  From Theorem \ref{0dimmon},
the function $f(x)$ decomposes as $f(x) = g(h(x))$ with $h(x_i)=h(x_j)$.
This gives us Case (ii) of Theorem~\ref{hyperellipticmon} once we note
that the vanishing cycle for $t$ close to the bifurcation value is
pushed forward to a cycle homotopic to zero via $h$.

\medskip

This completes the proof of Theorem~\ref{hyperellipticmon}.


\section{Generalized monodromy, tangential problems and Chebyshev polynomials}
\label{generalizedmon}

In this final section we would like to consider the possibility
of generalizing the tangential center focus problem
or monodromy problem to the case where the cycle $\delta(t)$ lies in
$H_1(F^{-1}(t),\C)$.  We will show that the Chebyshev polynomials
give counterexamples to both Theorems \ref{hyperellipticthm} and
\ref{hyperellipticmon} in this case.  That is, in the Chebyshev case
there are non-trivial subspaces of $H_1(F^{-1}(t),\C)$ which are
invariant under the monodromy, and one can choose a cycle $\delta(t)$
in this subspace and a $1$-form $\omega$ so that $\omega$ is neither
relatively exact, nor $f(x)$ decomposable.

We make the following conjecture along the lines of Theorem~\ref{hyperellipticmon}.

\begin{conjecture}\label{conj}
  If there exists a non-trivial subspace of $H_1(F^{-1}(t),\C)$
  which is invariant under the monodromy, then either the polynomial
  $f$ decomposes as $f=g\circ h$, or $f$ is equivalent to either $x^p$ or
  the Chebyshev polynomial $T_p$ for some prime $p$.
\end{conjecture}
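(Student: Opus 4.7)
The plan is to prove the contrapositive of Conjecture~\ref{conj}. By Theorem~\ref{muller}, if $f$ is neither decomposable nor equivalent to $x^p$ or $T_p$ for some prime $p$, then $G=\mbox{Mon}(f)$ acts 2-transitively on the roots of $f(x)=t$. It therefore suffices to show that 2-transitivity of $G$ forces the monodromy representation on $H_1(F^{-1}(t),\C)$ to be irreducible.

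Let $V$ be a non-zero monodromy-invariant subspace of $H_1(F^{-1}(t),\C)$. The monodromy around a single Morse critical value of $F$ acts on $H_1$ as a Picard--Lefschetz transvection $s_\delta(v)=v+\langle v,\delta\rangle\delta$ along the associated vanishing cycle $\delta$, where $\langle\cdot,\cdot\rangle$ denotes the intersection form on the smooth affine curve $F^{-1}(t)$. Invariance of $V$ under $s_\delta$ forces $\langle v,\delta\rangle\delta\in V$ for every $v\in V$, yielding the dichotomy: for every vanishing cycle $\delta$, either $\delta\in V$, or $V\subseteq\delta^\perp$.

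Suppose first that $V$ contains some vanishing cycle $\delta$. By 2-transitivity, case (i) of Theorem~\ref{hyperellipticmon} applies and the monodromy orbit of $\delta$ generates $H_1(F^{-1}(t),\Q)$ over $\Q$; since $V$ is invariant, it contains this orbit, hence equals $H_1(F^{-1}(t),\C)$. Otherwise $V$ contains no vanishing cycle, so $V\subseteq\bigcap_\delta\delta^\perp$; because the vanishing cycles $\C$-span $H_1$ (again by case (i) of Theorem~\ref{hyperellipticmon}), the subspace $V$ is contained in the radical of the intersection form. When $\deg f$ is odd the smooth affine curve $F^{-1}(t)$ has a single point at infinity, so its $H_1$ coincides with that of the smooth projective compactification and the intersection pairing is non-degenerate; this forces $V=0$ and concludes the odd-degree case.

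The main obstacle is the even-degree case, in which $F^{-1}(t)$ has two points at infinity and the intersection pairing acquires a one-dimensional radical, generated by a small loop around either point at infinity. One needs to exclude this line as a monodromy-invariant subspace, either by exhibiting an element of $\mbox{Mon}(F)$ that interchanges the two points at infinity and so moves any candidate class in the radical, or by passing to the hyperelliptic compactification (a smooth projective curve of genus $(n-2)/2$ with symplectic non-degenerate $H_1$) and rerunning the Picard--Lefschetz argument there. A secondary issue, to be verified carefully, is that the identification of $\mbox{Mon}(F)$ with the group generated by transvections along vanishing cycles is valid at every critical value of $F$; this is automatic when $f$ has only simple critical points but may require some adjustment in degenerate configurations.
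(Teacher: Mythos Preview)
The paper does not prove this statement: it is explicitly labelled a conjecture. The only thing proved in its vicinity is the $0$-dimensional analogue, and that proof is a one-liner: by Theorem~\ref{muller} the $2$-transitive case is the only one to check, and then one quotes the classical fact (Hall, p.~281) that the natural permutation representation of a $2$-transitive group over $\C$ has exactly two irreducible pieces, the trivial line and the sum-zero hyperplane $H_0(f^{-1}(t),\C)$. This argument does not transfer to $H_1(F^{-1}(t),\C)$, because the monodromy of $F$ does not act on $H_1$ by permuting a basis. So there is no proof in the paper to compare against; you are attempting to settle an open problem.

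Your argument is sound when $\deg f$ is odd and $f$ is Morse. The Picard--Lefschetz dichotomy is correct, and the appeal to Theorem~\ref{hyperellipticmon}(i) is legitimate on both occasions: the monodromy orbit of a vanishing cycle consists of vanishing cycles (same critical value, different path), so once that orbit spans $H_1$ over $\Q$ it spans over $\C$, and then non-degeneracy of the intersection form kills the orthogonal branch. The non-Morse caveat you flag can be handled by perturbation or by factoring the local monodromy at a degenerate value into transvections along the several cycles that coalesce there. So in the odd-degree case you have essentially proved the conjecture.

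The even-degree case is a genuine gap, and your proposed fix (a) does not work. The monodromy around a finite critical value of $F$ is realised by an isotopy of the fibre that is the identity outside a compact set; hence it fixes small loops about both points at infinity, and the radical of the intersection form is fixed \emph{pointwise}, not merely preserved. It is therefore a non-zero proper monodromy-invariant line for every even-degree $f$, decomposable or not. Your option (b) fares no better as stated: passing to the smooth projective model gives irreducibility of the quotient $H_1(F^{-1}(t),\C)/\mathrm{radical}$, but that does not rule out the radical itself as an invariant subspace of the affine $H_1$. The upshot is that either the conjecture is literally false for even $n$, or ``non-trivial'' was intended to exclude sub-representations on which the monodromy acts trivially; in either reading, what your method actually establishes in the even case is irreducibility modulo the radical, which is weaker than the conjecture as written.
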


For completeness, it would be also interesting to investigate,
in analogy with Theorems~\ref{hyperellipticthm} and \ref{hyperellipticmon},
whether any cycles $\delta(t)$ which lie in the invariant subspace of
$H_1(F^{-1}(t),\C)$ and any 1-form $\omega$ for which $\int_{\delta(t)}\omega\equiv 0$
also must factor through $h$ if there is a decomposition.  We do not consider
these questions here.

In the $0$-dimensional case, Conjecture \ref{conj} is in fact a theorem.
From Propositions \ref{monthms} and \ref{muller}, if the monodromy
is not imprimitive (and hence the polynomial $f$ decomposes),
it is either equivalent to $x^p$ or $T_p(x)$ for some prime $p$, or
the monodromy group is $2$-transitive.  In the latter case, it is
a classical result (\cite{hall}, p281) that the permutation representation
over $\C$ of a $2$-transitive group decomposes into two irreducible
sub-representations.  One is the trivial one, and one the space
$\{\sum k_i x_i|k_i\in \C, \sum k_i = 0\}$, which is just our
space $H_0(f^{-1}(t),\C)$.

\begin{theorem}\label{section7thm}
  Let $f(x) = T_p(x)$ be the Chebyshev polynomial for some prime $p>2$, and let $F = y^2 + f(x)$.
  \begin{itemize}
    \item[(i)]  The space $H_0(f^{-1}(t),\C)$ splits into $p-1$ invariant subspaces $W_{e^{2\pi k i/p}}$, $k=1,\ldots, (p-1)/2$.
    Furthermore, if $\delta(t) \in W_{e^{2\pi k i/p}}$, $k=2,\ldots,(p-1)/2$, then there exists a 0-form $\omega$
    such that $\int_{\delta(t)}\omega \equiv 0$, but $\omega$ is not decomposable.

    \item[(ii)] The space $H_0(F^{-1}(t),\C)$ splits into $p-1$ invariant subspaces $V_{e^{2\pi k i/p}}$, $k=1,\ldots, (p-1)/2$.
    Furthermore, if $\delta(t)\in V_{e^{2\pi k i/p}}$, $k=2,\ldots,(p-1)/2$, then there exist a 1-form $\omega$
    such that $\int_{\delta(t)}\omega \equiv 0$, but $\omega$ is not relatively exact.
  \end{itemize}
\end{theorem}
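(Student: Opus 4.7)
The plan is to identify $\mbox{Mon}(T_p)$ with the dihedral group $D_p$ of order $2p$, decompose the homology into the resulting $D_p$-irreducibles, and then exhibit explicit simple forms $\omega$ whose integrals vanish on cycles in the non-trivial invariant subspaces by character orthogonality for $\Z_p$. I would first invoke Theorem~\ref{muller} to place $\mbox{Mon}(T_p)$ inside $\mbox{Aff}(\Z_p)$ with a regular $p$-cycle, and note that the two critical values $t=\pm 1$ of $T_p$ each contribute an involution (every turning point of a Chebyshev polynomial is simple), forcing $\mbox{Mon}(T_p)\cong D_p$. Parametrizing
\[
 t = \tfrac12(u^p+u^{-p}), \qquad x_j(t) = \tfrac12\bigl(\zeta_p^j u + \zeta_p^{-j}u^{-1}\bigr), \quad j\in\Z_p,\ \zeta_p=e^{2\pi i/p},
\]
the rotation acts by $x_j\mapsto x_{j+1}$ and the reflection by $x_j\mapsto x_{-j}$, and the permutation module $\bigoplus_j\C\cdot x_j$ splits over $\C$ into the trivial representation plus $(p-1)/2$ irreducible planes
\[
  W_{\zeta_p^k} = \mbox{span}(\delta_k^+,\delta_k^-), \qquad \delta_k^\pm(t) := \sum_{j=0}^{p-1}\zeta_p^{\pm jk}\,x_j(t),\quad k=1,\dots,(p-1)/2,
\]
which are the invariant subspaces of part~(i). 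For part~(ii), the loops $L_j$ of Section~\ref{monpoly} form the analogous $D_p$-permutation module (modulo the relation $\sum_j L_j=0$), and the hyperelliptic involution acts as $-1$ on $H_1(F^{-1}(t),\C)$, so the same decomposition produces invariant planes $V_{\zeta_p^k}$ spanned by $\tilde\delta_k^\pm := \sum_j \zeta_p^{\pm jk}L_j$.

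\textbf{Step 2: counterexample for (i).} Using $T_\ell(x_j) = \tfrac12(\zeta_p^{j\ell}u^\ell+\zeta_p^{-j\ell}u^{-\ell})$ together with $\sum_{j\in\Z_p}\zeta_p^{jm}=p$ if $p\mid m$ and $0$ otherwise, I obtain the selection rule: $\int_{\delta_k^\pm}T_\ell$ vanishes unless $\ell\equiv\pm k\pmod p$. For any $k\in\{2,\dots,(p-1)/2\}$ the choice $\omega(x)=x=T_1(x)$ then satisfies $\int_\delta\omega\equiv 0$ on all of $W_{\zeta_p^k}$, since $1\not\equiv\pm k\pmod p$; and $\omega=x$ is patently not of the form $\eta\circ h$ with $\deg h>1$, so it is not decomposable in the sense of Theorem~\ref{center}.

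\textbf{Step 3: counterexample for (ii) and main obstacle.} By Proposition~\ref{preparation} I can reduce modulo relatively exact forms to $\omega=y\,g(x)\,dx$. Using $2y\,dy=-f'(x)\,dx$ on the level set together with integration by parts, the plan is to rewrite $\int_{\tilde\delta_k^\pm} y\,g(x)\,dx$ as a $D_p$-weighted sum $\sum_j\zeta_p^{\pm jk}\Psi_g(x_j(t))$ of Chebyshev-type quantities, giving precisely the same arithmetic selection rule as in Step~2: only Chebyshev components $T_\ell$ of $g$ with $\ell\equiv\pm k\pmod p$ contribute. Granting this, the concrete choice $\omega=y\,dx$ satisfies $\int_{\tilde\delta}\omega\equiv 0$ on $V_{\zeta_p^k}$ for $k\ge 2$. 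A direct calculation on $y\,dx=A\,dF+dB$, expanding $A$ in powers of $y$ and matching the coefficients of $dx$ and $dy$, shows that $y\,dx$ is not relatively exact; and since $p$ is prime, $f=T_p$ is itself indecomposable, so no factorization through a non-trivial $h$ is available either. The main obstacle is exactly this selection-rule verification on the non-compact loops $L_j$ with closed supports; once it is in place the remaining non-exactness and non-decomposability checks are routine bookkeeping.
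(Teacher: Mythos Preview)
Your treatment of part~(i) is correct and essentially matches the paper's: both take $\omega=x$ and reduce the vanishing to a character sum over $\Z_p$. Your representation-theoretic framing (identifying $\mbox{Mon}(T_p)\cong D_p$ and decomposing the permutation module into the $(p-1)/2$ two-dimensional irreducibles) is a clean way of saying what the paper does by direct computation in Proposition~\ref{0dim}.

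For part~(ii) there is a genuine gap, and it lies upstream of the ``main obstacle'' you flag. Your assertion that the loops $L_j$ form a $D_p$-permutation module under the monodromy of $F$ is not correct over $\C$: as the paper records in the hyperelliptic monodromy section, a swap of $x_i$ and $x_{i+1}$ sends $L_i\mapsto 2L_i-L_{i+1}$ and $L_{i+1}\mapsto L_i$, a unipotent transformation rather than a transposition; it only becomes the permutation action after reduction modulo~$2$. (Incidentally, the $L_j$ live in the section on the hyperelliptic monodromy problem, not Section~\ref{monpoly}.) So the invariance of your proposed planes $V_{\zeta_p^k}=\mbox{span}\bigl(\sum_j\zeta_p^{\pm jk}L_j\bigr)$ does not follow from the permutation picture, and the relation $\sum_j L_j=0$ is asserted without justification. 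The paper sidesteps both this and your integral obstacle by pulling back to the angular cover $x=\cos\theta$, $\tilde F(\theta,y)=y^2+\cos(p\theta)$. There one has \emph{compact} vanishing cycles $\tilde S_{2\ell}$, $\tilde C_{2\ell+1}$ on which the Picard--Lefschetz monodromy is written down explicitly; the $D_\infty$-invariant combinations $\tilde S_w$, $\tilde C_w$ are checked directly to span monodromy-invariant planes $V_w$, and the integrals $\int_{S_w}y\,dx$, $\int_{C_w}y\,dx$ are computed via the shift $\theta\mapsto\theta+2\ell\pi/p$, which reduces each to a single fixed integral times precisely the character sum $\sum_\ell(\xi^{k\ell}-\xi^{-k\ell})(\xi^\ell-\xi^{-\ell})$ you anticipate. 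Working with compact cycles in the $\theta$-cover is what makes both the invariance and the selection rule tractable; the unbounded $L_j$ carry the monodromy in a genuinely more complicated way.
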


\begin{remark}
  It would be sufficient to consider homology groups with
  coefficients in $\Q(w)$ for some $p$-th root of unity $w$.
\end{remark}

The rest of this section is devoted to proving Theorem \ref{section7thm}.  We assume that $n$ is odd throughout.
\smallskip

Recall that $T_n(x)$ is defined by
\begin{equation}
\label{Tn} T_n(x) = T_n(\cos(\t))=\cos(n\t),\quad \text{where}\quad x=\cos \t
\end{equation}
Clearly $T_n$ has degree $n$, and hence $T_p(x)$ is not decomposable for
$p$ prime.

We prove the theorem by pulling back to $\t$ coordinates.
Let $\tilde F:\C^2\to\C$ be the function
$$
\tilde F(\t,y)=y^2+\cos(n\t)$$ and let $X_{\tilde F}$ be the
Hamiltonian vector field associated to $\tilde F$. The vector field
$X_{\tilde F}$ has infinitely many singular points
$p_\ell=(\frac{\ell\pi}{n},0)$, $\ell\in\Z$. These points are
saddles $s_{2k}=(\frac{2k\pi}{n},0)$ for $\ell=2k$ even and centers
$c_{2k+1}=(\frac{\pi}{n}+\frac{2k\pi}{n},0)$, for $\ell=2k+1$ odd.

For $t\in(-1,1)$, let $\tilde C_{2k+1}$, $k\in\Z$, be the cycle
turning once in the positive direction around the center $c_{2k+1}$.
All cycles $\tilde C_{2k+1}$ vanish for $t=-1$. Similarly, let
$\tilde S_{2k}$ be the complex cycle vanishing at the saddle
$s_{2k}$, for $t=1$. The orientation is chosen by the condition:

$$
(\tilde C_{2i-1},\tilde S_{2i})=(\tilde S_{2i},\tilde C_{2i+1})=1.$$
We denote $\tilde P_\ell$ the cycle $\tilde S_\ell$, for $\ell$ even
or $\tilde C_\ell$, for $\ell$ odd.
 The complex fiber $\tilde
F^{-1}(t)$ can be represented as a two-sheeted Riemann surface
$y=\sqrt{t-\cos(n\t)}$, with a countable number of cuts.
The homology
group of a fiber $H_1(\tilde F^{-1}(t),\Z)$ for
$t\in\C\setminus\{-1,1\}$, is the free abelian group on the set of
cycles $\cup_{i\in\Z}\{C_i,S_i\}$.

The flow of the gradient vector field of $\tilde F$ allows us to
define a compact support fibration on
$\C^2\setminus (\tilde F^{-1}(-1)\cup \tilde F^{-1}(1))$. That is,
for any $t_0\in\C\setminus\{-1,1\}$, and any compact $K$ in
$F^{-1}(t_0)$, there exists a neighborhood $U$ of
$t_0\in\C\setminus\{-1,1\}$ and an embedding $\Phi:U\times K\to
\C^2\setminus\{-1,1\}$, such that $\Phi(t_0,p)=p$ and
$F\circ\Phi(t,p)=t$, for any $t\in U$. Moreover, the trivialization
$\Phi$ is well defined up to an isotopy which is identity on $K$ and
preserves the fibers.

The existence of the compact support fibration enables the definition
of the monodromy acting on $H_1(\tilde F^{-1}(t),\Z)$. In fact, by
the Picard-Lefschetz formula, it follows:
\begin{equation}\label{moneqn}
\begin{array}{rclrcl}
\tilde M_1(\tilde C_{2i+1})&=&\tilde C_{2i+1}+\tilde S_{2i}-\tilde S_{2i+2},
&\tilde M_1(\tilde S_{2i})&=&\tilde S_{2i},\\
\tilde M_{-1}(\tilde C_{2i+1})&=&\tilde C_{2i+1}, &\tilde
M_{-1}(\tilde S_{2i})&=&\tilde S_{2i}+\tilde C_{2i-1}-\tilde
C_{2i+1}.
\end{array}
\end{equation}

Consider the mapping $\cos:\C\to\C$ and denote $\Pi=\cos\times Id$,
then
$$\begin{array}{rcl}
(\t,y)\in\C^2&\buildrel{\Pi}\over{\longrightarrow}&(x,y)\in\C\setminus\{-1,1\}\times \C\\
\searrow \tilde  F&&\swarrow F\\
&\C\setminus\{-1,1\} &\end{array}
$$
Let
$$
P_\ell=\Pi_*(\tilde P_\ell),\quad S_{2\ell}=\Pi_*(\tilde
S_{2\ell}),\quad C_{2\ell+1}=\Pi_*(\tilde C_{2\ell+1}).
$$

The map $\cos:\C\setminus\pi\Z\to\C\setminus\{-1,1\}$ is a covering with covering group
$G=\Z_2*\Z_2 = D_\infty$ generated by two transformations of order
$2$: $a(\t)=-\t$ and $b(\t)=2\pi-\t$. The composition $b\circ a$ is
the translation $\t\mapsto 2\pi+\t$, which we denote $T$.  We take $a$
and $T$ as the generators of $D_\infty$, with $Ta=aT^{-1}$.

The map $\Pi:(\C\setminus\pi\Z)\times\C\to(\C\setminus\{-1,1\})\times\C$
is a covering with the same covering group $G=D_\infty$ generated
by the two transformations $a\times id$, and $T\times id$.

The action of the group $G=D_\infty$ on the cycles
$\tilde P_\ell$ (i.e. $\tilde C_{2k+1}$ or $\tilde S_{2k}$) is given
by
$$
\begin{array}{rcl}
T\times id(\tilde P_\ell)&=&\tilde P_{\ell+2n},\\
 a\times id(\tilde P_{\ell})&=&-\tilde P_{-\ell},
\end{array}$$

We let $H^c_1(\tilde F^{-1}(t),\C)$ be the homology with closed support of $\tilde F^{-1}(t)$ with complex
coefficients.  An element of $H^c_1(\tilde F^{-1}(t),\C)$ is of the form $C=\sum_{\ell\in\Z}
z_\ell \tilde P_\ell$.

We define the action of the covering group $G$ on $H^c_1(\tilde
F^{-1}(t),\C)$ as follows: $$ g(C)=g(\sum_{\ell\in\Z} z_\ell
P_\ell)=\sum_{\ell\in\Z} z_\ell g(P_\ell).$$

Let $H_1^G(\tilde F^{-1}(t),\C)$ be the subspace of $H^c_1(\tilde
F^{-1}(t),\C)$ consisting of elements of $H_1(\tilde F^{-1}(t),\C)$ invariant
under the action of the group $G$.  The monodromy operators $\tilde M_\sigma$, $\sigma=\pm 1$
extend naturally to $H_1(\tilde F^{-1}(t),\C)$.

The space $H_1^G(\tilde F^{-1}(t),\C)$ is the $\C$-vector space generated
by $\sum_{g\in G} g(P_{\ell})$, $\ell\in\Z,$ and the extended
monodromy $\tilde M_\sigma$ preserves it.

Let $\Pi^*:H_1(F^{-1}(t),\C)\to H^c_1(\tilde F^{-1}(t),\C)$ be the pullback
via the map $\Pi$, then $\Pi^*$ gives an isomorphism of $H_1(F^{-1}(t),\C)$
onto $H_1^G(\tilde F^{-1}(t),\C)$ with inverse
$\Pi' = (\Pi^*)|_{H_1^G(\tilde F^{-1}(t),\C)}^{-1}$ from $H_1^G(\tilde F^{-1}(t),\C)$ to $H_1(F^{-1}(t),\C)$.

Let $M_\sigma$ and $\tilde M_\sigma$, $\sigma=\pm1$,
be the monodromies corresponding to turning around $\sigma=\pm1$, as
given in (\ref{m}). Then the following diagram is commutative
\begin{equation}
\begin{array}{rcl}\label{m}
H_1^G(\tilde
F^{-1}(t),\C)&\buildrel{\Pi'}\over{\rightarrow}&H_1(F^{-1}(t),\C)\\
\tilde M_\sigma\downarrow&& \downarrow M_\sigma\\
H_1^G(\tilde
F^{-1}(t),\C)&\buildrel{\Pi'}\over{\rightarrow}&H_1(F^{-1}(t),\C)\\
\end{array}.
\end{equation}

Let $w$ be an $n$-th rooth of unity.
The vectors $\sum w^\ell \tilde S_{2\ell}$ and  $\sum
w^\ell \tilde C_{2\ell+1}$, are clearly invariant by the
translation $T$, but not by $a$. Taking
\begin{equation}\label{stc}
\begin{array}{rcccl}
\tilde S_w&=&(1+a)\sum w^\ell
\tilde S_{2\ell}&=&\sum_{\ell=-\infty}^{\infty}(w^\ell-w^{-\ell})\tilde S_{2\ell},\\
\tilde C_w&=&(1+a)\sum w^\ell \tilde
C_{2\ell+1}&=&\sum_{\ell=-\infty}^{\infty}(w^\ell-w^{-\ell-1}) \tilde C_{2\ell+1},\\
\end{array}
\end{equation}
we therefore obtain elements of $H_1^G(\tilde F^{-1}(t),\C)$.

We let $S_w$ and $C_w$ in $H_1( F^{-1}(t),\C)$ represent the images of
$\tilde S_w$ and $\tilde C_w$ under $\Pi'$. That is
\begin{equation}\label{sc}
\begin{array}{rcl}
S_w&=&\Pi'\tilde S_w=\sum_{\ell=0}^{n-1}(w^\ell-w^{-\ell})S_{2\ell},\\
C_w&=&\Pi'\tilde C_w=\sum_{\ell=0}^{n-1}(w^\ell-w^{-\ell-1})
C_{2\ell+1}.
\end{array}
\end{equation}

By direct substitution from \eqref{moneqn} we can calculate the variation, $\tilde \V_{t_0}$, $t_0=\pm1$,
on $H_1^G(\tilde F^{-1}(t),\C)$ around $t_0=\pm1$.  Due to \eqref{m}, these calculations push forward
to $H_1(F^{-1},\C)$ via $\Pi'$, to obtain
$$
\begin{array}{rcl}
\tilde \V_1(\tilde C_w)&=&(1+w^{-1})\tilde S_w,\\
\tilde \V_1(\tilde S_w)&=&0,\\
\tilde \V_{-1}(\tilde C_w)&=&0,\\
\tilde \V_{-1}(\tilde S_w)&=&(-1+w)\tilde C_w
\end{array}
\qquad \buildrel{\Pi'}\over{\Rightarrow} \qquad
\begin{array}{rcl}
\V_1(C_w)&=&(1+w^{-1})S_w,\\
\V_1(S_w)&=&0,\\
\V_{-1}(C_w)&=&0,\\
\V_{-1}(S_w)&=&(-1+w)C_w.
\end{array}
$$

We denote $\tilde V_w=Span(\tilde
C_w,\tilde S_w)\subset H_1^G(\tilde F^{-1}(t),\C)$ and
$V_w=\Pi' \tilde V_w\subset H_1(F^{-1}(t),\C).$
The spaces $V_w\subset H_1(F^{-1}(t),\C)$ are invariant under the action
of the monodromy group $\cal M$ of the fibration given by $F$.
Moreover, for $n$ odd,
$$H_1(F^{-1}(t),\C)=\oplus_{w^n=1, Im(w)>0}V_w.$$

\begin{proposition}\label{example}
Let $\delta=S_w$ or $\delta=C_w$  be the family of cycles in
$H_1(F^{-1}(t),\C)$, given by (\ref{sc}), for $w=e^{\frac{2k\pi i}{n}}$,
$k=2,\ldots,(n-1)/2$ and let $\omega=y\,dx$. Then
$$
\int_\delta\omega\equiv0,$$ but the form $\omega$ is not relatively
exact.
\end{proposition}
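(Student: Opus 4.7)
The plan is to compute both integrals directly by pulling back through $\Pi=\cos\times\mathrm{Id}$: since $\Pi^*(y\,dx)=-y\sin\theta\,d\theta$ and $S_w$, $C_w$ are finite $\C$-linear combinations of the basic cycles $S_{2\ell}=\Pi_*\tilde S_{2\ell}$ and $C_{2\ell+1}=\Pi_*\tilde C_{2\ell+1}$, the problem reduces to evaluating the single-cycle integrals $\int_{\tilde S_{2\ell}}y\sin\theta\,d\theta$ and $\int_{\tilde C_{2\ell+1}}y\sin\theta\,d\theta$.

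To do this I would exploit two symmetries of $\tilde F=y^2+\cos(n\theta)$ that lie outside the covering group $G=D_\infty$. First, the translation $T_0\colon\theta\mapsto\theta+2\pi/n$ preserves $\tilde F$ and, being isotopic to the identity through Hamiltonian symmetries, sends $\tilde S_{2\ell}\mapsto\tilde S_{2\ell+2}$ and $\tilde C_{2\ell+1}\mapsto\tilde C_{2\ell+3}$ with signs consistent with the intersection normalization $(\tilde C_{2i-1},\tilde S_{2i})=(\tilde S_{2i},\tilde C_{2i+1})=1$. Pulling back along $(T_0)^\ell$ thereby replaces each saddle (resp.\ center) integral by an integral over $\tilde S_0$ (resp.\ $\tilde C_1$) of a shifted sinusoid. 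Second, the local reflection $\phi\mapsto-\phi$ at $s_0$, and its conjugate $\theta\mapsto 2\pi/n-\theta$ at $c_1$, send $\tilde S_0\mapsto-\tilde S_0$ and $\tilde C_1\mapsto-\tilde C_1$ while leaving $y\sin\phi\,d\phi$ invariant, forcing $\int_{\tilde S_0}y\sin\phi\,d\phi=0$ and $\int_{\tilde C_1}y\sin\phi\,d\phi=0$. With $I_S(t):=\int_{\tilde S_0}y\cos\phi\,d\phi$ and $J_C(t):=\int_{\tilde C_1}y\cos\phi\,d\phi$, trigonometric expansion then yields
\[
\int_{S_{2\ell}}y\,dx=-\sin\!\tfrac{2\ell\pi}{n}\,I_S(t),\qquad
\int_{C_{2\ell+1}}y\,dx=-\sin\!\tfrac{(2\ell+1)\pi}{n}\,J_C(t).
\]

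Substituting these into \eqref{sc} with $w=e^{2\pi ik/n}=\zeta^k$ and writing $2i\sin(\cdot)=e^{i\cdot}-e^{-i\cdot}$ splits each integral into four geometric sums of the form $\sum_{\ell=0}^{n-1}\zeta^{(k\pm 1)\ell}$ (times root-of-unity prefactors); each such sum equals $n$ precisely when $k\equiv\mp 1\pmod n$ and vanishes otherwise. For $k\in\{2,\dots,(n-1)/2\}$ with $n=p$ an odd prime, no such congruence holds, so $\int_{S_w}\omega\equiv 0$ and $\int_{C_w}\omega\equiv 0$. To rule out relative exactness, it suffices to exhibit one cycle with nonvanishing integral: for the excluded value $k=1$, exactly the sum corresponding to $k-1\equiv 0$ survives and gives $\int_{S_\zeta}y\,dx=-in\,I_S(t)$, which is not identically zero (a short local computation near the saddle shows $I_S(t)\sim i\pi\sqrt 2\,(1-t)/n$ as $t\to 1^-$). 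Since any relatively exact form must integrate to zero on every cycle of every fiber, $\omega=y\,dx$ cannot be relatively exact.

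The principal technical obstacle is the sign bookkeeping in the translation step --- verifying that $(T_0)^\ell(\tilde S_0)=+\tilde S_{2\ell}$ and $(T_0)^\ell(\tilde C_1)=+\tilde C_{2\ell+1}$ (without hidden sign flips) under the intersection-form conventions recorded above. This follows because $T_0$ is orientation-preserving and isotopic to the identity through symmetries of $\tilde F$ that move the critical points continuously, so the intersection pairings are transported intact; but the argument deserves explicit verification. Once that is settled, the rest of the proof is routine local analysis together with a root-of-unity sum.
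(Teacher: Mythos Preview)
Your overall strategy matches the paper's: pull back through $\Pi$, reduce to integrals over the basic cycles $\tilde S_0$ and $\tilde C_1$ via translation, kill the $\sin$-part by a reflection symmetry, and collapse the remaining root-of-unity sum. The $S_w$ computation is correct and essentially identical to the paper's.

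There is, however, a genuine slip in the $C_w$ case. You assert that the reflection $\theta\mapsto 2\pi/n-\theta$ (which fixes $c_1=\pi/n$ and preserves $\tilde F$) leaves $y\sin\theta\,d\theta$ invariant and hence forces $\int_{\tilde C_1}y\sin\theta\,d\theta=0$. It does not: under this map $\sin\theta\mapsto\sin(2\pi/n-\theta)$, which is not $\pm\sin\theta$. In fact, using precisely this reflection one finds
\[
\int_{\tilde C_1}y\sin\theta\,d\theta \;=\;\tan\!\frac{\pi}{n}\,J_C(t)\;\neq\;0.
\]
Consequently your displayed formula $\int_{C_{2\ell+1}}y\,dx=-\sin\frac{(2\ell+1)\pi}{n}\,J_C(t)$ is off by the constant factor $1/\cos(\pi/n)$; fortunately this factor is independent of $\ell$, so the vanishing conclusion survives once the error is corrected. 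The paper avoids this issue by translating all the way to the origin (by $(2\ell+1)\pi/n$), so that the relevant center cycle sits at $\theta=0$ for the Hamiltonian $y^2-\cos(n\theta)$ and the honest reflection $\theta\mapsto-\theta$ applies directly.

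For non-exactness, your route through $S_\zeta$ and a local asymptotic of $I_S$ near $t=1$ is valid but heavier than needed: the paper simply observes that $\int_{C_1}y\,dx$ is the nonzero area enclosed by the real oval $C_1$.
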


\begin{proof}
Let $w=\xi^k$, $k=2,...,(p-1)/2$, where $\xi=e^{\frac{2i\pi}{n}}$.

Consider first the case $\delta=S_w$. Let $I=\int_{\tilde S_0}y\cos
\t\, d\t$. We calculate $I_{2\ell}=\int_{\Pi_*(\tilde
S_{2\ell})}y\,dx=-\int_{\tilde S_{2\ell}}y\sin \t\, d\t.$ We make a change
of coordinates $\t\mapsto \t+\frac{2\ell\pi}{n}$. This gives
$I_{2\ell}=-\cos\frac{2\ell\pi}{n}\int_{\tilde S_0}y\sin
\t\, d\t-\sin\frac{2\ell\pi}{n}\int_{\tilde S_0}\cos \t\, d\t.$ The first
integral vanishes, giving

\begin{equation}\label{i2l}
I_{2\ell}=-\sin\frac{2\ell\pi}{n}I=-\frac{\xi^\ell-\xi^{-\ell}}{2i}I.
\end{equation}
This gives
\begin{equation}\label{sw}
\begin{array}{rl}
\int_{S_w}y\,dx&=\sum_{\ell=0}^{n-1}
(w^\ell-w^{-\ell})I_{2\ell}=-I\sum_{\ell=0}^{n-1}
(\xi^{k\ell}-\xi^{-k\ell})\frac{\xi^\ell-\xi^{-\ell}}{2i}\\&=-\frac{I}{2i}\sum_{\ell=0}^{n-1}
(\xi^{(k+1)\ell}-\xi^{(k-1)\ell}-\xi^{(-k+1)\ell}+\xi^{-(k+1)\ell})=0.
\end{array}
\end{equation}
The last equality holds as each of the four sums which appear
vanishes.
 Consider now the case $\delta=C_w$.
Denote $T_{-\pi/n}(\tilde C_1)$ the transport of the translation of
the cycle $\tilde C_1$ by $-\pi/n$, thus giving a cycle centered at
the origin. Let $J=\int_{T_{-\pi/n}(\tilde C_1)}y\cos \t\, d\t.$ We
calculate $J_{2\ell+1}=\int_{\Pi_*(\tilde
C_{2\ell+1})}y\,dx=-\int_{\tilde C_{2\ell+1}}y\sin \t\, d\t.$ We make the
change of coordinates $\t\mapsto \t+\frac{(2\ell+1)\pi}{n}$. This
gives
$J_{2\ell+1}=-\cos\frac{(2\ell+1)\pi}{n}\int_{T_{-\pi/n}(\tilde
C_1)}y\sin \t\, d\t-\sin\frac{(2\ell+1)\pi}{n}\int_{T_{-\pi/n}(\tilde
C_1)}y\cos \t\, d\t=-\sin\frac{(2\ell+1)\pi}{n}J.$ That is
\begin{equation}\label{j2l+1}
J_{2\ell+1}=-\sin\frac{(2\ell+1)\pi}{n}J=-\frac{\xi^{\ell+1/2}-\xi^{\ell-1/2}}{2i}J.
\end{equation}
\begin{equation}\label{cw}
\int_{C_w}ydx=\sum_{\ell=0}^{n-1}(w^{\ell}-w^{-\ell-1})J_{2\ell+1}=
-\frac{J}{2i}\sum_{\ell=0}^{n-1}(\xi^{k\ell}-\xi^{-k\ell-k})(\xi^{\ell+1/2}-\xi^{\ell-1/2})
=0,
\end{equation}
similarly to (\ref{sw}).

Note that it is obvious that the form $\omega=ydx$ is not relatively
exact since for instance $\int_{C_1}y\,dx\not=0$ is the non-zero area
bounded by $C_1$.
\end{proof}
\medskip

This completes the proof of part (ii) of the Theorem \ref{section7thm}.
We now prove the statement in part (i).
\medskip

Let
$\t_0^\pm=\pm\frac{\arccos t}{n},$
$\t^\pm_\ell=\t^\pm_0+\frac{2\pi\ell}{n}$,
$x_\ell^\pm=\cos(\t_\ell^\pm)$. Note that
$x_\ell^\pm=\cos(\t^\pm_0+\frac{2\pi\ell}{n})=\cos
\t_0^\pm\cos\frac{2\pi\ell}{n}-\sin \t_0^\pm\sin\frac{2\pi\ell}{n}$.
Hence,

\begin{equation}\label{xell}
x_\ell^+-x_\ell^-=-2\sin \t_0^+\sin\frac{2\pi\ell}{n}=i\sin
\t_0^+(\xi^\ell-\xi^{-\ell}).
\end{equation}

Let $$\delta_{2\ell}(c)=x^+_\ell(c)-x^-_\ell(c),\quad
\ell=0,\ldots,n-1,$$
 be  the families of simple cycles of the
Chebyshev polynomial $T_n$, and let
$$
\delta_w(t)=\sum_{\ell=0}^{n-1}(w^\ell-w^{\ell-1})\delta_{2\ell}\in H_0(f^{-1}(t),\C)
$$
where $w=e^{\frac{2k\pi i}{n}}$,
$k=1,\ldots,(n-1)/2$. Taking $W_w$ as the subspace of $H_0(f^{-1}(t),\C)$
spanned by $\delta_w$, it is clear that $W_w$ is invariant under the monodromy,
and $$H_0(f^{-1}(t),\C)=\oplus_{w^n=1, Im(w)>0}W_w.$$

\begin{proposition}\label{0dim}
 The $0$-dimensional Abelian integral $I=\int_{\delta}\omega$
vanishes identically, for the cycle $\delta=\delta_w$, with
$w=e^{\frac{2k\pi i}{n}}$, $k=2,\ldots,(n-1)/2$ and $\omega(x) = x$,
but the $0$-form $\omega$ is not relatively exact.
\end{proposition}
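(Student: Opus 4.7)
The plan is a direct geometric-sum calculation, closely paralleling the evaluations of $\int_{S_w}y\,dx$ and $\int_{C_w}y\,dx$ in the proof of Proposition \ref{example}. First I would substitute the explicit formula (\ref{xell}) into the definition of $\delta_w$: the integral of $\omega=x$ over the simple cycle $\delta_{2\ell}$ is simply $x_\ell^+-x_\ell^-=i\sin\theta_0^+(\xi^\ell-\xi^{-\ell})$, so weighting by $(w^\ell-w^{\ell-1})$ and summing gives
$$\int_{\delta_w}x=i\sin\theta_0^+\sum_{\ell=0}^{n-1}(w^\ell-w^{\ell-1})(\xi^\ell-\xi^{-\ell}).$$

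Writing $w=\xi^k$, I would factor $(1-\xi^{-k})$ out of each summand and expand the remaining product, reducing the right-hand side to
$$i\sin\theta_0^+\,(1-\xi^{-k})\sum_{\ell=0}^{n-1}\bigl(\xi^{(k+1)\ell}-\xi^{(k-1)\ell}\bigr).$$
Each inner sum is a geometric series and vanishes provided the exponent $k\pm 1$ is nonzero modulo $n$. The hypothesis $k\ge 2$ excludes $k-1\equiv 0\pmod n$, while $k\le (n-1)/2$ forces $k+1<n$ and so excludes $k+1\equiv 0\pmod n$. Both sums are therefore zero, and $\int_{\delta_w}x\equiv 0$.

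For the non-exactness claim, I would observe that the polynomial $\omega(x)=x$ has degree $1$ while $f=T_n$ has degree $n\ge 5$, so $x$ cannot be written as a polynomial in $f$. This is the $0$-dimensional analog of relative exactness: any polynomial in $f$ takes the same value on the two endpoints of every simple cycle (and hence integrates to zero against every element of $H_0(f^{-1}(t),\C)$), and by the same L\"uroth-type reasoning as in Theorem~\ref{center} the converse also holds.

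The entire argument is routine; the only delicate point is bookkeeping the role of the restriction $k\ge 2$. Exactly as in Proposition~\ref{example}, the case $k=1$ must be excluded because then the summand $\sum\xi^{(k-1)\ell}=n$ fails to cancel and the integral is nonzero. This is perfectly consistent with the homology decomposition $H_0(f^{-1}(t),\C)=\bigoplus_w W_w$: the subspace $W_\xi$ is the one containing the simple cycles, on which Theorem~\ref{center} forbids the vanishing of $\int x$, while all the other components $W_{\xi^k}$ with $2\le k\le (n-1)/2$ furnish the claimed counterexamples.
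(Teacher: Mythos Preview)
Your proof is correct and follows essentially the same direct geometric-sum computation as the paper's. One small discrepancy worth flagging: you work with the coefficients $(w^\ell-w^{\ell-1})$ exactly as displayed in the definition of $\delta_w$, whereas the paper's own proof silently uses $(w^\ell-w^{-\ell})$ (matching the formula for $S_w$ in (\ref{sc}), so the displayed definition evidently contains a typo); this is why the paper's expansion produces four geometric sums rather than your two, but the computation is valid either way. For non-exactness the paper simply observes $\int_{\delta_\ell}x=x_\ell^+-x_\ell^-\ne 0$ directly, while you argue by degree that $x\notin\C[T_n]$; either route suffices.
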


\begin{proof}
The proof is similar to the proof of the previous theorem. In fact it
is simpler. The simple cycles $\delta_{2\ell}$ entering in the
definition of the cycle $\delta$ corresponds to the ramification
points around which the cycle $S_{2\ell}$ turns. We have
$$\int_\delta\omega=\sum_{\ell=0}^{n-1}(w^\ell-w^{-\ell})(x_\ell^+-x_\ell^-)=
i\sin
\t_0^+\sum_{\ell=0}^{n-1}(\xi^{k\ell}-\xi^{-k\ell})(\xi^\ell-\xi^{-\ell})=0.
$$
On the other hand
$\int_{\delta_\ell}\omega=x_\ell^+-x_\ell^-=\frac{2\arccos
t}{n}\not=0$, so $\omega$ is not relatively exact.
\end{proof}


\end{document}